\documentclass[11pt]{article}
\pdfoutput=1
\usepackage{amsbsy,amsmath,amsthm,amssymb,graphicx}
\usepackage{verbatim}
\usepackage{placeins, float, subfig}
\usepackage{multirow}

\global\long\def\inv#1{\frac{1}{#1}}

\newcommand{\sX}{\mathsf{X}}

\newtheorem{theorem}{Theorem}
\newtheorem{proposition}{Proposition}
\newtheorem{corollary}{Corollary}
\newtheorem{lemma}{Lemma}

\theoremstyle{remark}

\newtheorem{example}{Example}

\usepackage[sort,longnamesfirst]{natbib}
\newcommand{\pcite}[1]{\citeauthor{#1}'s \citeyearpar{#1}}
\usepackage{geometry} 
\geometry{hmargin=3.0cm,vmargin={3.5cm,3.5cm},nohead,footskip=0.5in}

\setlength{\baselineskip}{0.3in} \setlength{\parskip}{.05in}

\DeclareMathOperator{\var}{Var}

\newcommand{\cF}{\mathcal{F}}
\newcommand{\cI}{\mathcal{I}}

\newcommand{\X}{\mathsf{X}}

\newcommand {\R} {\mathbb{R}}
\newcommand {\F} {\mathbb{F}}
\newcommand {\G} {\mathbb{G}}

\newcommand {\N} {\mathbb{N}}

\def\baro{\vskip  .2truecm\hfill \hrule height.5pt \vskip  .2truecm}
\def\barba{\vskip -.1truecm\hfill \hrule height.5pt \vskip .4truecm}
\begin{document}

\title{Markov Chain Monte Carlo Estimation of Quantiles}

\author{ Charles Doss \\ School of Statistics \\ University of
Minnesota \\ {\tt cdoss@umn.edu} \and James M. Flegal\footnote{Research
  supported by the National Science Foundation.} \\Department of
Statistics \\ University of California, Riverside \\ {\tt
  jflegal@ucr.edu} \and Galin L. Jones
  \footnote{Research supported by the National
    Institutes of Health and the National Science Foundation.}\\
  School of Statistics \\
  University of Minnesota \\
  {\tt galin@umn.edu}  \and  Ronald C. Neath \\
  Department of Mathematics and Statistics \\
  Hunter College, City University of New York \\
  {\tt rneath@hunter.cuny.edu} }
\maketitle

\maketitle

\begin{abstract}
  We consider quantile estimation using Markov chain Monte Carlo and
  establish conditions under which the sampling distribution of the
  Monte Carlo error is approximately Normal.  Further, we investigate
  techniques to estimate the associated asymptotic variance, which
  enables construction of an asymptotically valid interval estimator.
  Finally, we explore the finite sample properties of these methods
  through examples and provide some recommendations to practitioners.  
\end{abstract}

\section{Introduction}
\label{sec:Intro}

Let $\pi$ denote a probability distribution having support $\X
\subseteq \R^{d}$, $d \ge 1$.  If $W \sim \pi$ and $g : \X \to \R$ is
measurable, set $V=g(W)$.  We consider estimation of quantiles of the
distribution of $V$.  Specifically, if $0 < q < 1$ and $F_{V}$ denotes
the distribution function of $V$, then our goal is to obtain
\begin{equation*}
\xi_{q} := F_{V}^{-1} (q) = \inf \{ v: F_{V}(v) \ge q \} \; .
\end{equation*}
We will assume throughout that $F_{V}(x)$ is absolutely continuous and
has continuous density function $f_{V}(x)$ such that $0 <
f_{V}(\xi_{q}) < \infty$. Notice that this means $\xi_{q}$ is the
unique solution $y$ of $F_{V} (y -) \le q \le F_{V} (y)$.

Typically, it is not possible to calculate $\xi_{q}$ directly.  For
example, a common goal in Bayesian inference is calculating the
quantile of a marginal posterior distribution.  In these settings, the
quantile estimate is typically based upon Markov chain Monte Carlo
(MCMC) simulation methods and is almost always reported without
including any notion of the simulation
error. \citet{raft:lewi:how:1992} consider quantile estimation using
MCMC, but their method is based on approximating the MCMC process with
a two-state Markov chain, and does not produce an estimate of the
simulation error; see also \citet{broo:robe:1999} and
\citet{cowl:carl:1996} who study the properties of the method proposed
by \citet{raft:lewi:how:1992}. In contrast, our work enables
practitioners to rigorously asses the simulation error, and hence
increase the reliability of their inferences.

 The basic MCMC method entails simulating a Markov chain $X = \{
 X_{0}, X_{1}, \ldots \}$ having invariant distribution $\pi$.  Define
 $Y = \{Y_{0}, Y_{1}, \ldots \} =\{ g(X_{0}), g(X_{1}), \ldots \}$.
 If we observe a realization of $X$ of length $n$ and let $Y_{n(j)}$
 denote the $j$th order statistic of $\{Y_{0}, \ldots, Y_{n-1}\}$,
 then we estimate $\xi_{q}$ with
\begin{equation} 
\label{eq:estimator} 
\hat{\xi}_{n,q} : = Y_{n(j)} \quad \text{ where } ~~ j-1 < nq \le j \; . 
\end{equation}
We will see that $\hat{\xi}_{n,q}$ is strongly consistent for
$\xi_{q}$.  While this justifies the use of $\hat{\xi}_{n,q}$, it will
be more valuable if we can also assess the unknown Monte Carlo error,
$\hat{\xi}_{n,q} - \xi_{q}$. We address this in two ways.  The first
is by finding a  function $b : \mathbb{N} \times (0, \infty) \to
[0, \infty)$ such that for all $\epsilon > 0$
\begin{equation}
\label{eq:prob bd}
\Pr \left( |\hat{\xi}_{n,q} - \xi_{q} | > \epsilon \right) \le b(n,
\epsilon) \; .
\end{equation}
We also assess the Monte Carlo error through its approximate sampling
distribution.  We will show that under a mixing condition on $X$, a
quantile central limit theorem (CLT) will obtain; this mixing
condition is much weaker than the mixing conditions required for a CLT
for a sample mean \citep{jone:2004}. For now, assume there exists a
constant $\gamma^{2}(\xi_{q})> 0$ such that as $n \to \infty$
\begin{equation}
\label{eq:intro clt}
\sqrt{n}(\hat{\xi}_{n,q} - \xi_{q}) \stackrel{d}{\to} \text{N}(0,
\gamma^{2}(\xi_{q})) \; .  
\end{equation}
Note that $\gamma^{2}(\xi_{q})$ must account for the serial dependence
present in a non-trivial Markov chain and hence is more difficult to
estimate well than when $X$ is a random sample.  However, if we can
estimate $\gamma^{2}(\xi_{q})$ with, say $\hat{\gamma}_{n}^{2}$, then
an interval estimator of $\xi_{q}$ is
\[ 
\hat{\xi}_{n,q} \pm t_{*} \frac{\hat{\gamma}_{n}}{\sqrt{n}} 
\]
where $t_{*}$ is an appropriate Student's $t$ quantile. Such
intervals, or at least, the \textit{Monte Carlo standard error}
(MCSE), $\hat{\gamma}_{n} / \sqrt{n}$, are useful in assessing the
reliability of the simulation results as they explicitly describe the
level of confidence we have in the reported number of significant
figures in $\hat{\xi}_{n,q}$.  
For more on this approach see \citet{fleg:gong:2014},
\citet{fleg:hara:jone:2008}, \citet{fleg:jone:2011},
\citet{geye:2011}, \citet{jone:hara:caff:neat:2006} and
\citet{jone:hobe:2001}.

We consider three methods for implementing this recipe, all of which
produce effective interval estimators of $\xi_{q}$.  The first two are
based on the CLT at \eqref{eq:intro clt} where we consider using the
method of batch means (BM) and the subsampling bootstrap method (SBM)
to estimate $\gamma^{2}(\xi_{q})$.  Regenerative simulation (RS) is
the third method, but it requires a slightly different quantile CLT
than that in \eqref{eq:intro clt}.  Along the way we show that
significantly weaker conditions are available for the RS-based
expectation estimation case previously studied in
\citet{hobe:jone:pres:rose:2002} and \citet{mykl:tier:yu:1995}.

The remainder is organized as follows.  We begin in
Section~\ref{sec:background} with a brief introduction to some
required Markov chain theory.  In Section~\ref{sec:BM} we consider
estimation of $\xi_{q}$ with $\hat{\xi}_{n,q}$, establish a CLT for
the Monte Carlo error, and consider how to obtain MCSEs using BM and
SBM.  In Section~\ref{sec:RS}, we consider RS, establish an
alternative CLT and show how an MCSE can be obtained.  In
Section~\ref{sec:Applications}, we illustrate the use of the methods
presented here and investigate their finite-sample properties in three
examples.  Finally, in Section~\ref{sec:Discussion} we summarize our
results and conclude with some practical recommendations.

\section{Markov chain background}  \label{sec:background}

In this section we give some essential preliminary material.  Recall
that $\pi$ has support $\sX$ and let $\mathcal{B}(\X)$ be the Borel
$\sigma$-algebra.  For $n \in \N = \{1, 2, 3,\ldots \}$, let the
$n$-step Markov kernel associated with $X$ be $P^{n}(x,dy)$.  Then if
$A \in \mathcal{B}(\X)$ and $k\in \{0, 1, 2, \ldots \}$, $P^{n}(x,A) =
\Pr (X_{k+n} \in A | X_{k} =x) $.  Throughout we assume $X$ is Harris
ergodic ($\pi$-irreducible, aperiodic, and positive Harris
recurrent--see \citet{meyn:twee:2009} for definitions) and has
invariant distribution $\pi$.

Let $\|\cdot\|$ denote the total variation norm.  Further, let $M: \sX
\mapsto \R^+$ with $E_{\pi} M < \infty$ and $\psi: \N \mapsto \R^+$ be
decreasing such that
\begin{equation} \label{eq:tvn}
\Vert P^n (x, \cdot) - \pi(\cdot) \Vert \leq M(x) \psi(n) \; .
\end{equation}
\textit{Polynomial ergodicity of order $m$} where $m > 0$ means
\eqref{eq:tvn} holds with $\psi(n) = n^{-m}$.  \textit{Geometric
  ergodicity} means \eqref{eq:tvn} holds with $\psi(n) = t^n$ for some
$0 < t < 1$.  \textit{Uniform ergodicity} means that $X$ is
geometrically ergodic and $M$ is bounded. 

An equivalent characterization of uniform ergodicity is often more
convenient for applications. The Markov chain $X$ is uniformly ergodic
if and only if there exists a probability measure $\phi$ on $\sX$,
$\lambda > 0$, and an integer $n_{0} \ge 1$ such that
\begin{equation} \label{eq:unif}
P^{n_{0}} ( x , \cdot ) \ge \lambda \phi (\cdot) \text{ for each }x \in \sX \; . 
\end{equation}
When \eqref{eq:unif} holds we have that \cite[][p. 392]{meyn:twee:2009}
\begin{equation}
\label{eq:unif tv bound}
\| P^n (x, \cdot) - \pi(\cdot) \| \leq (1-\lambda)^{\lfloor n / n_{0} \rfloor} \; .
\end{equation}
 See \citet{jone:hobe:2001} for an accessible introduction to methods
 for establishing \eqref{eq:unif} and further discussion of the
 methods for establishing \eqref{eq:tvn}.

\section{Quantile estimation for Markov chains}
\label{sec:BM}
Recall $ Y = \{Y_{0}, Y_{1}, \ldots \} =\{ g(X_{0}), g(X_{1}), \ldots
\}$ and set $F_n(y) = n^{-1} \sum_{i=0}^{n-1} I(Y_{i} \le y)$.  By the
Markov chain version of the strong law of large numbers \citep[see
  e.g.][]{meyn:twee:2009} for each $y$, $F_{n}(y) \to F_{V}(y)$ with
probability 1 as $n \to \infty$.  Using this, the proof of the
following result is similar to the proof for when $Y$ is composed of
independent and identically distributed random variables \citep[see
  e.g.][]{serf:1981} and hence is omitted.

\begin{theorem} \label{thm:xi consistent}  With
  probability 1, $\hat{\xi}_{n,q} \to \xi_{q}$ as $n \to \infty$.
\end{theorem}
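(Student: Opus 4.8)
The plan is to exploit the pointwise strong law $F_{n}(y) \to F_{V}(y)$ together with the fact that the positive continuous density pins $\xi_{q}$ down as the unique crossing point of $F_{V}$ at level $q$. The argument is the standard sandwich used for sample quantiles in the i.i.d.\ case, and the only Markov-specific input is the a.s.\ convergence of the empirical distribution function at a fixed argument, which is exactly what the Markov chain SLLN supplies.

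First I would record the deterministic facts about $F_{V}$ near $\xi_{q}$. Since $F_{V}$ is continuous we have $F_{V}(\xi_{q}) = q$, and since $f_{V}$ is continuous with $f_{V}(\xi_{q}) > 0$ it is strictly positive on some neighborhood $(\xi_{q} - \eta, \xi_{q} + \eta)$; integrating the density then yields the strict inequalities $F_{V}(\xi_{q} - \epsilon) < q < F_{V}(\xi_{q} + \epsilon)$ for every $\epsilon \in (0, \eta)$. Next, for such a fixed $\epsilon$, I would invoke the SLLN at the two points $\xi_{q} - \epsilon$ and $\xi_{q} + \epsilon$: on a set of probability $1$ we have $F_{n}(\xi_{q} - \epsilon) \to F_{V}(\xi_{q} - \epsilon) < q$ and $F_{n}(\xi_{q} + \epsilon) \to F_{V}(\xi_{q} + \epsilon) > q$, so for all large $n$ both $F_{n}(\xi_{q} - \epsilon) < q$ and $F_{n}(\xi_{q} + \epsilon) > q$. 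It remains to translate these two empirical-CDF inequalities into bounds on $\hat{\xi}_{n,q}$. The order-statistic definition \eqref{eq:estimator} with $j-1 < nq \le j$ is equivalent to $\hat{\xi}_{n,q} = \inf \{ y : F_{n}(y) \ge q \}$, and monotonicity of $F_{n}$ then forces $\hat{\xi}_{n,q} \ge \xi_{q} - \epsilon$ from the first inequality and $\hat{\xi}_{n,q} \le \xi_{q} + \epsilon$ from the second. Hence $|\hat{\xi}_{n,q} - \xi_{q}| \le \epsilon$ eventually, on this probability-$1$ event.

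To pass from ``for each $\epsilon$'' to almost-sure convergence, I would apply the previous step along the sequence $\epsilon = 1/k$, $k \in \N$ with $1/k < \eta$, and intersect the corresponding probability-$1$ events. Being a countable intersection, this set still has probability $1$, and on it $\limsup_{n} |\hat{\xi}_{n,q} - \xi_{q}| \le 1/k$ for every $k$, i.e.\ $\hat{\xi}_{n,q} \to \xi_{q}$. The only place that requires care is the bookkeeping of the exceptional null sets: the SLLN delivers convergence at each fixed argument off an argument-dependent null set, so the whole argument must be confined to the countably many points $\xi_{q} \pm 1/k$ rather than to an uncountable family of thresholds. This is precisely why the sandwich is set up at two points per $\epsilon$ and why restricting to a sequence $\epsilon \to 0$ is enough. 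I expect this countable-intersection step, rather than any analytic estimate, to be the main (though entirely routine) obstacle.
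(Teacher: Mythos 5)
Your argument is correct and is exactly the route the paper intends: the paper omits the proof, noting only that it follows the standard i.i.d.\ argument (as in Serfling) once the Markov chain SLLN supplies $F_{n}(y) \to F_{V}(y)$ a.s.\ at each fixed $y$, and your two-point sandwich at $\xi_{q} \pm \epsilon$ with a countable sequence of $\epsilon$'s is precisely that argument. No gaps.
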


While this result justifies the use of $\hat{\xi}_{n, q}$ as an
estimator of $\xi_{q}$, it does not allow one to assess the unknown
Monte Carlo error $\hat{\xi}_{n,q} - \xi_{q}$ for any finite $n$. In
Section~\ref{sec:finite sample} we establish conditions under which
\eqref{eq:prob bd} holds, while in Section~\ref{sec:clt} we examine
the approximate sampling distribution of the Monte Carlo error.

\subsection{Monte Carlo error under stationarity}
\label{sec:finite sample}

We will consider (in this subsection only) a best-case scenario where
$X_{0} \sim \pi$, that is, the Markov chain $X$ is stationary.  We
begin with a refinement of a result due to \citet{wang:hu:yang:2011}
to obtain a useful description of how the Monte Carlo error decreases
with simulation sample size and the convergence rate of the Markov
chain. The proof is given in Appendix~\ref{app:wangetal}.

\begin{proposition} \label{prop:wangetal} Suppose the Markov chain $X$
  is polynomially ergodic of order $m > 1$.  If $\delta \in
  (9/(10+8m), \, 1/2)$, then, with probability 1, for sufficiently
  large $n$, there is a positive constant $C_{0}$ such that
  $\hat{\xi}_{n,q} \in [\,\xi_{q} - C_{0}n^{-1/2 + \delta} \sqrt{\log
      n}, \, \xi_{q} + C_{0}n^{-1/2 + \delta} \sqrt{\log n}\,]$.
\end{proposition}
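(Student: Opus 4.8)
The plan is to convert a deviation of the sample quantile $\hat{\xi}_{n,q}$ into a deviation of the empirical distribution function $F_n$ at two \emph{deterministic} points, and then to control that deviation by an almost-sure fluctuation bound for partial sums of a bounded functional of the chain. Write $\epsilon_n := C_0 n^{-1/2+\delta}\sqrt{\log n}$ for a fixed constant $C_0$ to be chosen, and note that $\epsilon_n \to 0$ while $n\epsilon_n \to \infty$ since $\delta < 1/2$. From the order-statistic definition in \eqref{eq:estimator} one has the equivalence $\{\hat{\xi}_{n,q} \le t\} = \{F_n(t) \ge j/n\}$, where $q \le j/n < q + n^{-1}$. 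This yields the inclusions $\{\hat{\xi}_{n,q} > \xi_{q}+\epsilon_n\} \subseteq \{F_n(\xi_q+\epsilon_n) < q+n^{-1}\}$ and $\{\hat{\xi}_{n,q} < \xi_q-\epsilon_n\} \subseteq \{F_n(\xi_q-\epsilon_n) \ge q\}$.

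Next I would use the regularity of $F_V$. Since $f_V$ is continuous with $f_V(\xi_q) > 0$ and $F_V(\xi_q) = q$, for all large $n$ we have $F_V(\xi_q+\epsilon_n) \ge q + \tfrac12 f_V(\xi_q)\epsilon_n$ and $F_V(\xi_q-\epsilon_n) \le q - \tfrac12 f_V(\xi_q)\epsilon_n$. Combining with the inclusions above and using $\epsilon_n \gg n^{-1}$, the event $\{|\hat{\xi}_{n,q} - \xi_q| > \epsilon_n\}$ forces $|F_n(y) - F_V(y)| \ge c\,\epsilon_n$ at one of the points $y = \xi_q \pm \epsilon_n$, where $c = \tfrac14 f_V(\xi_q)$. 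It therefore suffices, by the first Borel--Cantelli lemma, to show that $\sum_n \Pr\!\big(|F_n(\xi_q\pm\epsilon_n) - F_V(\xi_q\pm\epsilon_n)| \ge c\epsilon_n\big) < \infty$; this delivers the claim for a suitable fixed $C_0$ and all large $n$, almost surely.

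For the probabilistic step, fix $y$ and note that $I(Y_i \le y) - F_V(y)$ is a bounded, mean-zero functional of $X_i$. Polynomial ergodicity of order $m$ together with $E_\pi M < \infty$ makes the stationary chain absolutely regular with $\beta$-mixing coefficients $\beta(k) = O(k^{-m})$, so the centered partial sums are amenable to a Rosenthal/Fuk--Nagaev moment inequality of order $2p$, schematically $E\big|\sum_{i=0}^{n-1}(I(Y_i\le y)-F_V(y))\big|^{2p} \lesssim n^{p} + n^{2p-m}$. Applying Markov's inequality at the displacement level $c\,n\epsilon_n$ then bounds each tail probability.

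The step I expect to be the main obstacle is this last one, precisely because the mixing is only polynomial rather than geometric: the extra term $n^{2p-m}$ limits the moment order $p$ one may profitably use. After dividing by $(c\,n\epsilon_n)^{2p}$, summability over $n$ imposes two competing lower bounds on $\delta$, one decreasing and one increasing in $p$, and optimizing $p$ against the mixing rate $m$ and the logarithmic factor is exactly what pins down the admissible range, producing the stated threshold $\delta > 9/(10+8m)$. Since the proposition refines \citet{wang:hu:yang:2011}, I would import their moment inequality and concentrate the work on this optimization and on the bookkeeping, namely the union over the two moving points $\xi_q\pm\epsilon_n$ and the $\sqrt{\log n}$ inflation, rather than reproving the inequality from scratch.
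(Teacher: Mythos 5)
Your proposal is correct and takes essentially the same route as the paper: its Lemma~\ref{lem:wangetal} performs exactly your reduction to deviations of $F_n$ at the deterministic points $\xi_q\pm\epsilon_n$ together with the Taylor/continuity step on $F_V$ (this is precisely the paper's refinement of Lemma 3.3 of \citealp{wang:hu:yang:2011}), Corollary~\ref{cor:wangetal} supplies the $\alpha$-mixing rate $O(n^{-m})$ via Theorem~\ref{thm:basic mixing}, and the remaining moment-inequality/Borel--Cantelli bookkeeping that produces the $9/(10+8m)$ threshold is, just as you propose, imported from the cited proof rather than re-derived. The only caveat is that your schematic bound $n^{p}+n^{2p-m}$ would actually yield a better threshold than $9/(10+8m)$, so the stated range genuinely comes from the inequality used by \citet{wang:hu:yang:2011} and not from your sketch; since you explicitly defer to their inequality, this does not affect the argument.
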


For the rest of this section we consider finite sample properties of
the Monte Carlo error in the sense that our goal is to find an
explicit function $b : \mathbb{N} \times (0,\infty) \to [0, \infty)$
such that \eqref{eq:prob bd} holds.  There has been some research on
this in the context of estimating expectations using MCMC
\citep[e.g.][]{latu:niem:2011, latu:niem:2012, rudo:2012}, but this
has not been considered in the quantile case.  The proofs of the
remaining results in this section can be found in
Appendix~\ref{app:proof complete}.

\begin{theorem} \label{thm:complete bosq} If $X$ satisfies
  \eqref{eq:tvn}, then for any integer $a \in \left[ 1, n/2 \right]$
  and any $\epsilon > 0$ and $0<\delta<1$
\[
\Pr \left( | \hat{\xi}_{n,q} - \xi_{q} | > \epsilon \right) \le 8 \exp \left\{ - \frac{a \gamma^2}{8}   \right\} + 22a \left( 1 + \frac{4}{\gamma} \right) ^{1/2} \psi\left( \left\lfloor \frac{n}{2a} \right\rfloor \right) E_{\pi} M \; ,
\]
where $\gamma=\gamma(\delta,\epsilon) = \min \left\{ F_{V} ( \xi_{q}
  + \epsilon) - q, \delta (q - F_{V} ( \xi_{q} - \epsilon)) \right\} $. 
\end{theorem}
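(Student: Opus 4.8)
The plan is to convert the two-sided quantile deviation into deviations of the empirical distribution function $F_n$ at the two fixed points $\xi_q\pm\epsilon$, and then to control each such deviation with Bosq's exponential inequality for strictly stationary strongly mixing sequences. Since $\hat\xi_{n,q}$ is the $j$-th order statistic of $Y_0,\dots,Y_{n-1}$ with $j-1<nq\le j$, the event $\{\hat\xi_{n,q}>\xi_q+\epsilon\}$ coincides with $\{nF_n(\xi_q+\epsilon)\le j-1\}$, and $j-1<nq$ forces
\[ \{\hat\xi_{n,q}>\xi_q+\epsilon\}\subseteq\{F_V(\xi_q+\epsilon)-F_n(\xi_q+\epsilon)>\gamma_1\},\qquad \gamma_1:=F_V(\xi_q+\epsilon)-q . \]
Likewise $\{\hat\xi_{n,q}<\xi_q-\epsilon\}$ forces at least $j$ of the $Y_i$ to lie below $\xi_q-\epsilon$, so that $nF_n(\xi_q-\epsilon)\ge j\ge nq$ and
\[ \{\hat\xi_{n,q}<\xi_q-\epsilon\}\subseteq\{F_n(\xi_q-\epsilon)-F_V(\xi_q-\epsilon)\ge\gamma_2\},\qquad \gamma_2:=q-F_V(\xi_q-\epsilon). \]
Both $\gamma_1$ and $\gamma_2$ are strictly positive because $F_V$ is continuous with $f_V(\xi_q)>0$. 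Adding these two disjoint events bounds $\Pr(|\hat\xi_{n,q}-\xi_q|>\epsilon)$ by a sum of one-sided tail probabilities for $F_n(\xi_q\pm\epsilon)-F_V(\xi_q\pm\epsilon)$, each of which I enlarge to the corresponding two-sided deviation.

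Next I would apply Bosq's inequality at each point. For fixed $y$ the summands $I(Y_i\le y)-F_V(y)$ are, under the stationarity of this subsection ($X_0\sim\pi$), strictly stationary, bounded in absolute value by $1$, mean zero, and strongly mixing. Bosq's inequality then gives, for any integer $a\in[1,n/2]$ and level $\lambda>0$,
\[ \Pr\big(|F_n(y)-F_V(y)|>\lambda\big)\le 4\exp\!\Big\{-\tfrac{a\lambda^2}{8}\Big\}+22\Big(1+\tfrac{4}{\lambda}\Big)^{1/2} a\,\alpha\!\Big(\big\lfloor\tfrac{n}{2a}\big\rfloor\Big), \]
where $\alpha(\cdot)$ is the strong mixing coefficient of $X$. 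Using this at $y=\xi_q+\epsilon$ with $\lambda=\gamma_1$ and at $y=\xi_q-\epsilon$ with $\lambda=\gamma_2$ and summing produces the factor $8$ on the exponential terms and a combined factor of $44$ on the mixing terms.

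The remaining two steps pass from the mixing coefficient to the total-variation rate and collapse $\gamma_1,\gamma_2$ into a single $\gamma$. The hypothesis \eqref{eq:tvn} controls absolute regularity through $\beta(k)=\int\|P^k(x,\cdot)-\pi(\cdot)\|\,\pi(dx)\le\psi(k)\,E_\pi M$, and $\alpha(k)\le\tfrac12\beta(k)$, so that $\alpha(\lfloor n/(2a)\rfloor)\le\tfrac12\psi(\lfloor n/(2a)\rfloor)E_\pi M$; this is exactly what turns the combined $44$ into the stated $22$. Finally, for any $\delta\in(0,1)$ the quantity $\gamma=\min\{\gamma_1,\delta\gamma_2\}$ satisfies $\gamma\le\gamma_1$ and $\gamma\le\gamma_2$, and since the Bosq bound is decreasing in $\lambda$, replacing $\gamma_1$ and $\gamma_2$ by $\gamma$ only enlarges the right-hand side; the slack factor $\delta$ is carried because it can be optimized later when deriving complete-convergence rates. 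Assembling these pieces yields the claimed inequality. The routine part is the order-statistic reduction; the main obstacle is the correct invocation of Bosq's inequality—verifying its stationarity and mixing hypotheses through \eqref{eq:tvn}—together with the bookkeeping of constants ($8$, $22$, and the $(1+4/\gamma)^{1/2}$ factor), where the relation $\alpha\le\tfrac12\beta$ is precisely what reconciles $44$ with $22$.
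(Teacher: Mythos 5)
Your proposal is correct and follows essentially the same route as the paper: reduce the two one-sided quantile deviations to deviations of $F_n$ at $\xi_q\pm\epsilon$ (the paper's Lemma on bounds), apply Bosq's Theorem 1.3 for bounded stationary mixing sequences at each point, and pass from the mixing coefficient to $\psi(\cdot)E_\pi M$ via $2\alpha(n)\le\beta(n)$ and Davydov's identity; your bookkeeping ($44$ with $\alpha$ collapsing to $22$ with $\beta\le\psi E_\pi M$) matches the paper's ($11$ with $\beta$ per point, summed to $22$), and your handling of the $\delta$ slack on the lower tail agrees with the paper's Lemma.
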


To be useful Theorem~\ref{thm:complete bosq} requires bounding
$\psi(n) E_{\pi}M$.  There has been a substantial amount of work in
this area \citep[see e.g.][]{baxe:2005, fort:moul:2003, rose:1995a},
but these methods have been applied in only a few practically relevant
settings \citep[see e.g.][]{jone:hobe:2001, jone:hobe:2004}.  However,
in the uniformly ergodic case we have the following easy corollary.

\begin{corollary} \label{cor:unif bounds} If $X$ satisfies
  \eqref{eq:unif}, then we have for any $a \in [1, n/2]$, any
  $\epsilon > 0$ and any $0<\delta<1$
\[
\Pr \left( | \hat{\xi}_{n,q} - \xi_{q} | > \epsilon \right) \le 8 \exp \left\{ - \frac{a \gamma^2}{8}   \right\} + 22a \left( 1 + \frac{4}{\gamma} \right) ^{1/2}  (1-\lambda)^{\lfloor n / 2an_{0}
 \rfloor}\; ,
\]
where $\gamma=\gamma(\delta,\epsilon) = \min \left\{ F_{V} ( \xi_{q} +
  \epsilon) - q ,  \delta(q - F_{V} ( \xi_{q} - \epsilon) )\right\} $. 
\end{corollary}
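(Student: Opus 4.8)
The plan is to recognize that this is essentially a direct specialization of Theorem~\ref{thm:complete bosq}, in which the abstract mixing hypothesis \eqref{eq:tvn} is replaced by the concrete consequence of uniform ergodicity recorded in \eqref{eq:unif tv bound}. First I would observe that when the minorization condition \eqref{eq:unif} holds, the total variation bound \eqref{eq:unif tv bound} shows that \eqref{eq:tvn} is satisfied with the particular choices $M(x) \equiv 1$ and $\psi(n) = (1-\lambda)^{\lfloor n/n_{0}\rfloor}$. In particular $E_{\pi} M = 1$, so the factor $E_{\pi} M$ appearing in the bound of Theorem~\ref{thm:complete bosq} simply drops out, and the stated hypotheses of the corollary are exactly those of the theorem together with this explicit pair $(M,\psi)$.

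Next I would substitute these choices directly into the conclusion of Theorem~\ref{thm:complete bosq}. The first summand $8\exp\{-a\gamma^{2}/8\}$ and the prefactor $22a(1+4/\gamma)^{1/2}$ are untouched, while the tail factor becomes
\[
\psi\left(\left\lfloor \frac{n}{2a}\right\rfloor\right) E_{\pi} M = (1-\lambda)^{\lfloor \lfloor n/(2a)\rfloor / n_{0}\rfloor}.
\]
Since $\gamma = \gamma(\delta,\epsilon)$ is defined identically in the two statements, nothing further needs to be said about it.

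The only point requiring a moment's care is reconciling this nested floor with the cleaner exponent $\lfloor n/(2an_{0})\rfloor$ stated in the corollary. Here I would invoke the standard identity $\lfloor \lfloor x\rfloor / k\rfloor = \lfloor x/k\rfloor$, valid for any real $x$ and positive integer $k$, applied with $x = n/(2a)$ and $k = n_{0}$; this gives $\lfloor \lfloor n/(2a)\rfloor / n_{0}\rfloor = \lfloor n/(2an_{0})\rfloor$, so the two forms coincide and the claimed bound follows. Because the remainder of the argument is pure substitution, I expect no genuine obstacle: the nested-floor identity is the only step that goes beyond mechanically reading off Theorem~\ref{thm:complete bosq}, and even that could be avoided at the cost of a slightly weaker exponent by simply using $\lfloor \lfloor n/(2a)\rfloor/n_{0}\rfloor \ge \lfloor n/(2an_{0})\rfloor$ and the monotonicity of $(1-\lambda)^{(\cdot)}$.
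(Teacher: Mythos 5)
Your proposal is correct and matches the paper's argument in substance: the paper likewise reuses the intermediate bound from the proof of Theorem~\ref{thm:complete bosq} (stated there in terms of $\beta(\lfloor n/(2a)\rfloor)$) and then bounds that quantity by $(1-\lambda)^{\lfloor n/(2an_0)\rfloor}$ using \eqref{eq:beta mixing} together with \eqref{eq:unif tv bound}, which is exactly your substitution $M\equiv 1$, $\psi(n)=(1-\lambda)^{\lfloor n/n_0\rfloor}$ in different clothing. Your explicit handling of the nested floor via $\lfloor\lfloor n/(2a)\rfloor/n_0\rfloor=\lfloor n/(2an_0)\rfloor$ is the one step the paper glosses over, and it is valid.
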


\begin{example} \label{ex:linchpin}
Let 
\begin{equation}
\label{eq:da}
\pi(x,y) = \frac{4}{\sqrt{2\pi}} y^{3/2} \exp \left\{-y \left( \frac{x^{2}}{2} + 2 \right) \right\} I(0< y < \infty) 
\; .
\end{equation}
Then $Y|X=x \sim \text{Gamma}(5/2 , 2 + x^{2} /2)$ and marginally $X
\sim t(4)$--Student's $t$ with 4 degrees of freedom. Consider a
linchpin variable sampler \citep{acost:hube:jone:2014} which first
updates $X$ with a Metropolis-Hastings independence sampler having the
marginal of $X$ as the invariant distribution using a $t(3)$ proposal
distribution, then updates $Y$ with a draw from the conditional of $Y
| X$.  Letting $P$ denote the Markov kernel for this algorithm we show
in Appendix~\ref{app:linchpin} that for any measurable set $A$
\[
P((x,y), A) \ge \frac{\sqrt{9375}}{32 \pi}\int_{A} \pi(x',y')\, dx' dy'  
\]
and hence the Markov chain satisfies \eqref{eq:unif} with $n_{0}=1$ and $\lambda=\sqrt{9375}/ 32 \pi$.

Set $\delta=.99999$, $a=n/16$ and consider estimating the median of
the marginal of $X$, i.e. $t(4)$.  Then $q=1/2$ and $\xi_{1/2}=0$ so
that $\gamma =0.037422$.  Suppose we want to find the Monte Carlo
sample size required to ensure that the probability
$\hat{\xi}_{n,1/2}$ is within $.10$ of the truth is approximately 0.9.
Then Corollary~\ref{cor:unif bounds} gives
\[
\Pr \left( | \hat{\xi}_{4 \times 10^5,1/2} - \xi_{1/2} | > .1 \right) \le 0.101 \; .
\]
\end{example}

We can improve upon the conclusion of Corollary~\ref{cor:unif
  bounds}.

\begin{theorem} \label{thm:complete} If $X$ satisfies \eqref{eq:unif},
  then for every $\epsilon > 0$ and $0 < \delta < 1$
\[
\Pr \left( | \hat{\xi}_{n,q} - \xi_{q} | > \epsilon \right) \le 2 \exp
\left\{ - \frac{\lambda^2 ( n\gamma - 2 n_{0} / \lambda)^2}{2 n
  n_{0}^2} \right\} \; ,
\]
for $n > 2 n_{0} / (\lambda \gamma)$ where $\gamma = \min \left\{ F_{V} ( \xi_{q} + \epsilon) - q , \delta(q - F_{V} ( \xi_{q} - \epsilon)) \right\} $. 
\end{theorem}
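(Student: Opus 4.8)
The plan is to convert the two-sided quantile deviation into deviations of the empirical distribution function $F_n$ and then control each tail with a Hoeffding-type concentration inequality tailored to uniformly ergodic chains. First I would reproduce the reduction already used for Theorem~\ref{thm:complete bosq}. Writing $\hat{\xi}_{n,q} = Y_{n(j)}$ with $j \ge nq$, the event $\{\hat{\xi}_{n,q} > \xi_q + \epsilon\}$ forces fewer than $j$ of the $Y_i$ to fall at or below $\xi_q + \epsilon$, so it is contained in $\{F_n(\xi_q+\epsilon) < q\}$; since the chain is stationary in this subsection, $E_{\pi} F_n(\xi_q + \epsilon) = F_V(\xi_q + \epsilon)$, and this is a lower deviation of $F_n(\xi_q+\epsilon)$ from its mean of size at least $F_V(\xi_q+\epsilon) - q$. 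Symmetrically, $\{\hat{\xi}_{n,q} < \xi_q - \epsilon\}$ is contained in an upper-deviation event for the empirical distribution function at $\xi_q - \epsilon$ of size $q - F_V(\xi_q - \epsilon)$, with the extra factor $\delta$ absorbing the order-statistic and strict-versus-nonstrict bookkeeping exactly as in Theorem~\ref{thm:complete bosq}. A union bound then reduces everything to two one-sided tail probabilities for averages of the bounded indicators $I(Y_i \le y)$, whose oscillation is $1$.

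Next I would apply a Glynn--Ormoneit-type Hoeffding inequality for uniformly ergodic Markov chains, using the minorization \eqref{eq:unif} with constants $\lambda$ and $n_0$ as the mixing input (the relevant consequence being the geometric mixing \eqref{eq:unif tv bound}). For a stationary chain such an inequality bounds the probability that the average of a function of oscillation $C$ exceeds its mean by $\tau$ by an expression of the form $\exp\{-\lambda^2(n\tau - c_0)^2/(2 n C^2 n_0^2)\}$, where the offset $c_0$ is a constant multiple of $C n_0/\lambda$ and the bound is valid once $n\tau$ exceeds that offset. Specializing to the indicators, for which $C = 1$, yields the exponent $-\lambda^2(n\tau - 2n_0/\lambda)^2/(2 n n_0^2)$. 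Applying this to each tail with its own deviation, bounding both deviations below by $\gamma = \min\{F_V(\xi_q+\epsilon) - q,\ \delta(q - F_V(\xi_q - \epsilon))\}$, and summing the two contributions gives the stated bound with the leading factor $2$; the hypothesis $n > 2n_0/(\lambda\gamma)$ is exactly the requirement $n\gamma > 2n_0/\lambda$ that makes the squared term positive and the inequality nonvacuous.

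The main obstacle is the concentration step rather than the reduction. The Glynn--Ormoneit inequality is naturally stated for a one-step minorization, whereas \eqref{eq:unif} only supplies an $n_0$-step minorization, so I would have to push the argument through at lag $n_0$ --- either by running the regeneration or split-chain construction on the kernel $P^{n_0}$, or via a blocking argument over runs of length $n_0$ --- and then track the constants carefully enough to land on the precise offset $2n_0/\lambda$ and denominator $2 n n_0^2$. Pinning down that offset (and confirming that it is stationarity which lets me identify the centering with $F_V$) is where the genuine care lies; by contrast, the quantile-to-$F_n$ reduction and the final union bound are routine once Theorem~\ref{thm:complete bosq} is in hand.
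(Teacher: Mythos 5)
Your proposal matches the paper's proof essentially step for step: the reduction of each tail of $\{|\hat{\xi}_{n,q}-\xi_{q}|>\epsilon\}$ to a one-sided deviation of a sum of bounded indicators (the paper's Lemma~\ref{lem:bounds}, including the factor $\delta$ on the lower tail), followed by the Hoeffding inequality for uniformly ergodic chains of \citet{glyn:ormo:2002} (the paper's Lemma~\ref{lem:hoef}, which is already stated for an $n_{0}$-step minorization, so the lag-$n_{0}$ adaptation you flag as the main obstacle is supplied by the cited result), and a final union bound with $\gamma=\min\{\delta_{1},\delta\delta_{2}\}$. The argument is correct and is the same as the paper's.
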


\begin{example}[Continuation of Example~\ref{ex:linchpin}]
Theorem~\ref{thm:complete} yields that
\begin{equation}
\label{eq:bd2}
  \Pr \left( | \hat{\xi}_{4700,1/2} - \xi_{1/2} | > .1 \right) \le 0.101
\end{equation}
which clearly shows that the bound given in Example~\ref{ex:linchpin} is
conservative.

We will compare the bound in \eqref{eq:bd2} to the results of a
simulation experiment.  We performed 500 independent replications of
this MCMC sampler for each of 3 simulation lengths and recorded the
number of estimated medians for each that were more than .1 in
absolute value away from the median of a $t(4)$ distribution.  The
results are presented in Table~\ref{tab:linchpin} and
Figure~\ref{fig:linchpin}.  The results in Table~\ref{tab:linchpin}
show that the estimated probability in \eqref{eq:bd2} is somewhat
conservative.  On the other hand, from Figure~\ref{fig:linchpin} it is
clear that the estimation procedure is not all that stable until
$n=4700$.

\begin{table}
\begin{center}
\begin{tabular}{lccc} 
  \hline\hline
  Length & 500 & 1000 & 4700 \\
  Count  & 60 & 9 & 0 \\
  $\hat{\Pr}$ & .12 & .018 & 0 \\
  \hline
  \hline
\end{tabular}
\caption{Simulation length for each of 500 independent replications,
  counts of sample medians more than .1 away from 0 in absolute value
  and, $\hat{\Pr} ( | \hat{\xi}_{n,1/2} - \xi_{1/2} | >
    .1 )$.}
\label{tab:linchpin}
\end{center}
\end{table} 

\begin{figure} 
\begin{center}
\includegraphics[height=3.5in]{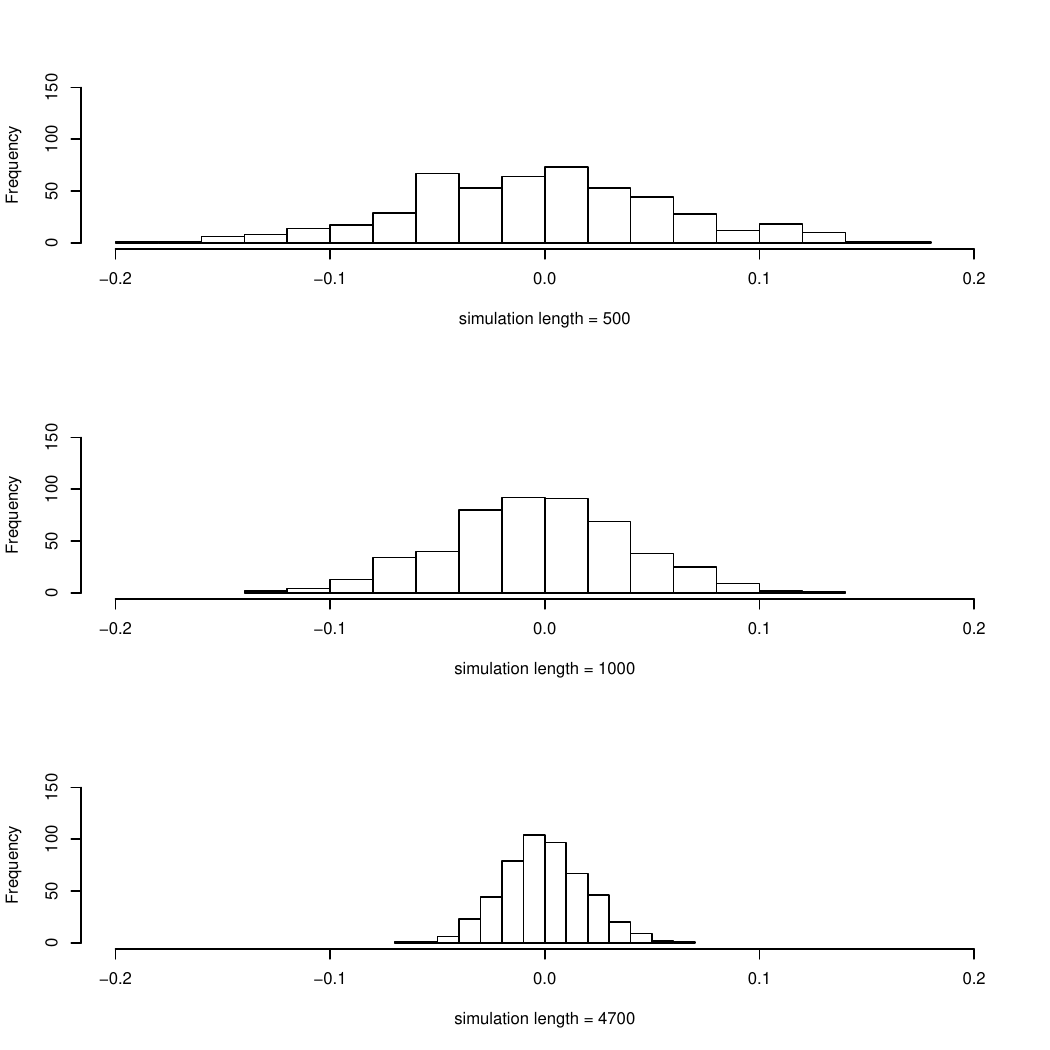}  
\caption{Histograms of 500 sample medians for each of 3 simulation lengths.}
\label{fig:linchpin} 
\end{center}
\end{figure}

\end{example}

\subsection{Central limit theorem}
\label{sec:clt}
We consider the asymptotic distribution of the Monte Carlo error
$\hat{\xi}_{n,q} - \xi_{q}$. Let
\begin{equation}
\label{eq:variance}
\sigma^{2}(y) := \text{Var}_{\pi} I(Y_{0} \le y) + 2 \sum_{k=1}^{\infty} \text{Cov}_{\pi} \left[ I(Y_{0} \le y), I(Y_{k} \le y) \right] \; . 
\end{equation}
The proof of the following result is in Appendix~\ref{app:clt proof}.

\begin{theorem} \label{thm:clt} If $X$ is polynomially ergodic of
  order $m > 1$ and if $\sigma^{2} (\xi_{q}) > 0$, then as $n \to
  \infty$
\begin{equation}
\label{eq:CLT}
\sqrt{n}(\hat{\xi}_{n,q} - \xi_{q}) \stackrel{d}{\to} \mathrm{N}(0,
\sigma^{2} (\xi_{q}) / [f_{V}(\xi_{q})]^{2}) \; .
\end{equation}
\end{theorem}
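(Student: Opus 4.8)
The plan is to follow the classical inversion route to quantile central limit theorems (as in \citet{serf:1981} for the i.i.d.\ case), reducing everything to the behaviour of the empirical distribution function $F_n$ in a shrinking neighbourhood of $\xi_q$. First I would exploit the exact relationship between the order-statistic estimator in \eqref{eq:estimator} and $F_n$: since $\hat{\xi}_{n,q} = Y_{n(j)}$ with $j = \lceil nq \rceil$, the event $\{Y_{n(j)} \le y\}$ holds if and only if at least $j$ of the first $n$ observations fall at or below $y$, so that for any fixed $t \in \R$,
\[
\Pr\!\left( \sqrt{n}(\hat{\xi}_{n,q} - \xi_q) \le t \right) = \Pr\!\left( F_n\!\left( \xi_q + t/\sqrt{n} \right) \ge j/n \right).
\]
Because $j/n - q = O(1/n) = o(1/\sqrt{n})$, the integer rounding is asymptotically negligible, and it suffices to identify the limiting law of $\sqrt{n}\big(F_n(\xi_q + t/\sqrt{n}) - q\big)$.

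Second, I would decompose this quantity, writing $y_n := \xi_q + t/\sqrt{n}$, as
\[
\sqrt{n}\big(F_n(y_n) - F_V(y_n)\big) + \sqrt{n}\big(F_V(y_n) - q\big).
\]
The deterministic term is handled by a first-order expansion: continuity of $f_V$ at $\xi_q$ gives $\sqrt{n}(F_V(y_n) - F_V(\xi_q)) \to t\, f_V(\xi_q)$. For the stochastic term evaluated at the fixed argument $\xi_q$, note that $I(Y_i \le \xi_q) = I(g(X_i) \le \xi_q)$ is a bounded functional of the chain, so a Markov chain central limit theorem \citep[see e.g.][]{meyn:twee:2009} applies: under polynomial ergodicity of order $m>1$ one obtains $\sqrt{n}(F_n(\xi_q) - q) \xrightarrow{d} \mathrm{N}(0, \sigma^2(\xi_q))$ with $\sigma^2(\xi_q)$ exactly the lag-covariance series in \eqref{eq:variance}, the summability of those covariances (and hence finiteness of $\sigma^2(\xi_q)$) being guaranteed by the mixing decay implied by \eqref{eq:tvn}.

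The crux, and the main obstacle, is to show that the centered empirical process does not oscillate between $\xi_q$ and $y_n$, namely the asymptotic equicontinuity
\[
\sqrt{n}\Big[\big(F_n(y_n) - F_V(y_n)\big) - \big(F_n(\xi_q) - F_V(\xi_q)\big)\Big] \xrightarrow{p} 0.
\]
The increment is a centered partial sum of the bounded variables $I(\xi_q < Y_i \le y_n) - \Pr(\xi_q < Y_0 \le y_n)$, whose marginal variance is only $O(|y_n - \xi_q|) = O(n^{-1/2})$. To control the fluctuation of such sums I would first translate polynomial ergodicity of order $m$ into an absolute-regularity rate $\beta(k) = O(k^{-m})$ for the chain (passing to the stationary version, which leaves the limiting law unchanged), then bound high moments of the increment via a Rosenthal-type inequality for $\beta$-mixing sequences and combine these with a chaining/blocking argument across dyadic subintervals of the shrinking neighbourhood of $\xi_q$. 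The threshold $m > 11$ is expected to emerge precisely from balancing the mixing-rate decay against the moment order and the number of chaining levels needed to force the maximal oscillation to be $o_p(n^{-1/2})$; this step consumes the bulk of the technical work and explains why a substantially higher order of ergodicity is required here than for the plain expectation CLT.

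Finally I would assemble the pieces. Combining the three displays yields $\sqrt{n}(F_n(y_n) - q) \xrightarrow{d} \mathrm{N}\big(t\, f_V(\xi_q),\, \sigma^2(\xi_q)\big)$, whence
\[
\Pr\!\left( \sqrt{n}(\hat{\xi}_{n,q} - \xi_q) \le t \right) = \Pr\!\left( \sqrt{n}(F_n(y_n) - q) \ge \sqrt{n}(j/n - q) \right) \to \Phi\!\left( \frac{t\, f_V(\xi_q)}{\sigma(\xi_q)} \right),
\]
using the negligibility of $\sqrt{n}(j/n - q)$, the symmetry of the normal law, and continuity of the standard normal distribution function $\Phi$. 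The right-hand side is the distribution function of $\mathrm{N}\big(0,\, \sigma^2(\xi_q)/[f_V(\xi_q)]^2\big)$ evaluated at $t$, which establishes \eqref{eq:CLT}.
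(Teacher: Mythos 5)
Your overall architecture is sound and matches the paper's in its essential ingredients: a Markov chain CLT for the bounded functional $I(Y_i\le \xi_q)$ at the fixed point $\xi_q$, a Taylor expansion of $F_V$ contributing the factor $f_V(\xi_q)$, and --- the crux in both arguments --- negligibility of the increment of the centered empirical process near $\xi_q$. Where you differ is in how that crux is framed and discharged. You invert the distribution function and evaluate $F_n$ at the deterministic point $y_n=\xi_q+t/\sqrt{n}$, so for each fixed $t$ you only need \emph{pointwise} negligibility of a single increment over an interval of width $O(n^{-1/2})$; the paper instead evaluates the empirical process at the random point $\hat{\xi}_{n,q}$, which forces it to control the oscillation \emph{uniformly} over the wider interval $D_n$ of radius $O(n^{-1/2+\delta}\sqrt{\log n})$ supplied by Corollary~\ref{cor:wangetal}, and it obtains that uniform almost-sure Bahadur-type bound by citing Theorem 2.1 of \citet{wang:hu:yang:2011} (Lemma~\ref{lem:wangetal thm 2.1}); the constraint $\delta\in(3/(m+1),1/2)$ together with the requirement $-1/4+\delta<0$ is exactly what produces $m>11$. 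Two consequences. First, your claim that $m>11$ ``is expected to emerge'' from your Rosenthal-plus-chaining argument is unsubstantiated and, more to the point, probably wrong in an interesting way: for the pointwise increment your reduction actually requires, no chaining is needed --- a covariance bound of the form $|\mathrm{Cov}(Z_0,Z_k)|\le C\,\alpha(k)^{1-2/r}\|Z_0\|_r^2$ applied to $Z_i=I(\xi_q<Y_i\le y_n)-\Pr(\xi_q<Y_0\le y_n)$, whose $r$th moment is $O(n^{-1/2})$, gives $\mathrm{Var}\bigl(n^{-1/2}\sum_i Z_i\bigr)=O(n^{-1/(2r)})\to 0$ whenever $m(1-2/r)>1$, which holds for some $r$ as soon as $m>1$. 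So your route, carried out carefully, would likely prove the theorem under a substantially weaker ergodicity condition --- consistent with the paper's own conjecture in Section~\ref{sec:Discussion} that $m>11$ is not sharp. Second, as written your proposal leaves the key step as a sketch (``Rosenthal-type inequality \ldots chaining/blocking''), so to count as a complete proof you would need to either carry out the moment/covariance computation above or cite a specific empirical-process increment bound for $\beta$-mixing sequences, in the same way the paper leans on \citet{wang:hu:yang:2011}.
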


To obtain an MCSE we need to estimate $\gamma^{2}(\xi_{q})
:=\sigma^{2} (\xi_{q}) / [f_{V}(\xi_{q})]^{2}$.  We consider two
methods for doing this--in Section~\ref{sec:bm} we consider the method
of batch means while in Section~\ref{sec:Other} we consider
subsampling.

\subsubsection{Batch Means} \label{sec:bm}

To estimate $\gamma^{2}(\xi_{q})$, we substitute $\hat{\xi}_{n,q}$ for
$\xi_{q}$ and  estimate $f_{V}(\hat{\xi}_{n,q})$ and
$\sigma^{2} (\hat{\xi}_{n,q})$.

Consider estimating $f_{V}(\hat{\xi}_{n,q})$.  Consistently estimating
a density at a point has been studied extensively in the context of
stationary time-series analysis \citep[see e.g.][]{robi:1983} and many
existing results are applicable since the Markov chains in MCMC are
special cases of strong mixing processes.  In our examples we use
kernel density estimators with a Gaussian kernel to obtain
$\hat{f}_{V}(\hat{\xi}_{n,q})$, an estimator of
$f_{V}(\hat{\xi}_{n,q})$.

The quantity $\sigma^{2}(y)$, $y \in \mathbb{R}$ is familiar.  Notice that
\[
\sqrt{n} (F_n(y)   - E_{\pi} I(Y \le y)) \stackrel{d}{\to}
\mathrm{N}(0, \sigma^{2}(y))~\text{ as } n \to \infty 
\]
by the usual Markov chain CLT for sample means \citep{jone:2004}.
Moreover, we show in Corollary~\ref{cor:cont} that $\sigma^{2}(y)$ is
continuous at $\xi_{q}$.  In this context, estimating $\sigma^{2}(y)$
consistently is a well-studied problem and there are an array of
methods for doing so; see \citet{fleg:hara:jone:2008},
\citet{fleg:jone:2010}, \citet{fleg:jone:2011} and
\citet{jone:hara:caff:neat:2006}.  Here we focus on the method of
batch means for estimating $\sigma^{2}(\hat{\xi}_{n,q})$. For BM the
output is split into batches of equal size.  Suppose we obtain $n=a_n
b_n$ iterations $\{X_{0}, \ldots, X_{n-1}\}$ and for
$k=0,\ldots,a_n-1$ define $\bar{U}_{k}(\hat{\xi}_{n,q}) = b_n^{-1}
\sum_{i=0}^{b_n-1} I(Y_{k b_n+i} \le \hat{\xi}_{n,q})$.  Then the BM
estimator of $\sigma^{2}(\hat{\xi}_{n,q})$ is
\begin{equation}
\label{eq:bmvar}
\hat{\sigma}_{BM}^{2} (\hat{\xi}_{n,q}) = \frac{b_n}{a_n-1} \sum_{k=0}^{a_n-1} \left( \bar{U}_{k}(\hat{\xi}_{n,q}) - F_n(\hat{\xi}_{n,q}) \right)^{2}  \; . 
\end{equation}

Putting these two pieces together we estimate $\gamma^{2}(\xi_{q})$ with
\[
\hat{\gamma}^{2}(\hat{\xi}_{n,q}) := \frac {\hat{\sigma}_{BM}^{2}
  (\hat{\xi}_{n,q})}{[\hat{f}_{V}(\hat{\xi}_{n,q})]^2}
\]
and we can obtain an approximate $100(1 - \alpha)\%$ confidence interval for $\xi_{q}$ by 
\begin{equation} \label{eq:ci bm} 
\hat{\xi}_{n,q} \pm z_{\alpha/2} \frac{ \hat{\gamma}(\hat{\xi}_{n,q}) }{ \sqrt{n}} \; ,
\end{equation}
where $z_{\alpha/2}$ is a standard Normal quantile.

\subsubsection{Subsampling} 
\label{sec:Other} 

It is natural to consider the utility of bootstrap methods for
estimating quantiles and the Monte Carlo error.  Indeed, there has
been a substantial amount of work on using bootstrap methods for
stationary time-series \citep[e.g.][]{bert:clem:2006, buhl:2002,
  carl:1986b, datt:mcco:1993, poli:2003}.  However, in our
experience, MCMC simulations are typically sufficiently long so that
standard bootstrap methods are prohibitively computationally
expensive.

We focus on the subsampling bootstrap method (SBM) described in
general by \cite{poli:roma:wolf:1999} and, in the context of MCMC, by
\cite{fleg:2012} and \cite{fleg:jone:2011}.  The basic idea is to
split $X$ into $n-b+1$ overlapping blocks of length $b$.  We then
estimate $\xi_{q}$ over each block resulting in $n-b+1$ estimates.  To
this end, consider the $i$th subsample of $Y$, $\{Y_{i-1}, \dots, Y_{i
  + b - 2}\}$.  Define the corresponding ordered subsample as
$\{Y^{i*}_{b(1)} , \dots, Y^{i*}_{b(b)}\}$ and quantile estimator as
\[
\xi^{*}_{i} = Y^{i*}_{b(j)} \text{ where } j-1 < bq \le j \text{ for } i = 1, \dots, n - b + 1 \; .
\]
If 
\[
\bar{\xi}^* = \frac{1}{n-b+1} \sum_{i = 1}^{n-b+1}  \xi^{*}_{i} \; ,
\]
then the SBM estimator of $\gamma^{2}(\xi_{q})$ is given by 
\begin{equation*}
\hat{\gamma}^2_{S} = \frac{b}{n-b+1} \sum_{i = 1}^{n-b+1} (\xi^{*}_{i}
- \bar{\xi}^* )^2 \; .
\end{equation*}
Note that SBM avoids having to estimate the density
$f_{V}(\hat{\xi}_{n, q})$.  An approximate $100(1 - \alpha)\%$
confidence interval for $\xi_{q}$ is given by
\begin{equation} \label{eq:ci sbm} 
\hat{\xi}_{n,q} \pm z_{\alpha/2} \frac{ \hat{\gamma_{S}}(\hat{\xi}_{n,q}) }{ \sqrt{n}} \; ,
\end{equation}
where $z_{\alpha/2}$ is an appropriate standard Normal quantile.

\section{Quantile estimation for regenerative Markov chains}
\label{sec:RS}

Regenerative simulation (RS) provides an alternative estimation method
for Markov chain simulations.  RS is based on simulating an augmented
Markov chain and Theorem~\ref{thm:clt} will not apply.  We derive
an alternative CLT based on RS and consider a natural estimator of the
variance in the asymptotic Normal distribution.

Recall $X$ has $n$-step Markov kernel $P^{n}(x,dy)$ and suppose there
exists a function $s : \sX \rightarrow [0,1]$ with $E_{\pi}s > 0$ and
a probability measure $Q$ such that
\begin{equation}
\label{eq:minor.gen}
P(x,A) \geq s(x) Q(A) ~~ \mathrm{for}~\mathrm{all}~ x \in \mathsf{X} ~
\mathrm{and} ~ A \in \mathcal{B} \; . 
\end{equation}
We call $s$ the \textit{small function} and $Q$ the \textit{small
  measure}.  Define the \textit{residual measure}
\begin{equation}
\label{eq:residual}
{\mathsf R}(x, dy) = \left\{ \begin{array}{cc}
\frac{\displaystyle P(x, dy) - s(x)Q(dy)}
     {\displaystyle 1 - s(x)} & ~~s(x) < 1 \; \; \\
%\delta_x (dy) & s(x) = 1 \\
Q (dy) & ~~s(x) = 1   \\
\end{array} \right.
\end{equation}
so that
\begin{equation}
\label{eq:split}
P(x, dy) = s(x) Q(dy) + (1 - s(x)) {\mathsf R}(x, dy) \; .
\end{equation}
We now have the ingredients for constructing the \textit{split chain},
\[
X' = \left\{ (X_{0},\delta_{0}), (X_{1}, \delta_{1}), (X_{2},
  \delta_{2}), \ldots\right\}
\]
which lives on $\sX \times \{0,1\}$. Given $X_{i}= x$, then
$\delta_{i}$ and $X_{i+1}$ are found by \baro
\begin{enumerate}
\item Simulate $\delta_{i} \sim \mathrm{Bernoulli}( s(x) )$ 
\item If $\delta_{i}= 1$, simulate $X_{i+1}\sim Q(\cdot)$; otherwise 
$X_{i+1}\sim {\mathsf R}(x, \cdot)$.  
\end{enumerate}
\barba

Two things are apparent from this construction.  First, by
\eqref{eq:split} the marginal sequence $\left\{ X_{n} \right\}$ has Markov
transition kernel given by $P$.  Second, the set of $n$ for which
$\delta_{n-1} = 1$, called \emph{regeneration times}, represent
times at which the chain probabilistically restarts itself in the
sense that $X_{n} \sim Q(\cdot)$ does not depend on $X_{n-1}$.

The main practical impediment to the use of regenerative simulation
would appear to be the means to simulate from the residual kernel
${\mathsf R}(\cdot, \cdot)$, defined at \eqref{eq:residual}.  Interestingly, as
shown by \citet{mykl:tier:yu:1995}, this is essentially a non-issue,
as there is an equivalent update rule for the split chain which does not
depend on ${\mathsf R}$.  Given $X_{k} = x$, find $X_{k+1}$ and
$\delta_{k}$ by

\baro
\begin{enumerate}
\item Simulate $X_{k+1} \sim P(x, \cdot)$
\item Simulate $\delta_{k} \sim \text{Bernoulli}(r(X_{k}, X_{k+1}))$ where
\begin{equation*}
r(x,y) = \frac{s(x) Q(dy) }{P(x, dy)} \; .
\end{equation*}
\end{enumerate}
\barba

RS has received considerable attention in the case where
either a Gibbs sampler or a full-dimensional Metropolis-Hastings
sampler is employed. In particular, \citet{mykl:tier:yu:1995} give
recipes for establishing minorization conditions as in
\eqref{eq:minor.gen}, which have been implemented in
several practically relevant statistical models; see e.g.
\citet{doss:tan:2013, gilk:robe:sahu:1998, hobe:jone:robe:2006,
  jone:hara:caff:neat:2006, jone:hobe:2001, roy:hobe:2007}.  

Suppose we start $X'$ with $X_{0}\sim Q$; one can always discard the
draws preceding the first regeneration to guarantee this, but it is
frequently easy to draw directly from $Q$
\citep{hobe:jone:pres:rose:2002, mykl:tier:yu:1995}. We will write
$E_{Q}$ to denote expectation when the split chain is started with
$X_{0} \sim Q$.  Let $0 = \tau_0 < \tau_1 < \tau_2 < \ldots$ be the
regeneration times so that $\tau_{t+1} = \min \left\{ i > \tau_{t}:
  \delta_{i-1} = 1 \right\}$.  Assume $X'$ is run for $R$ tours so
that the simulation is terminated the $R$th time that a
$\delta_{i}=1$.  Let $\tau_{R}$ be the total length of the simulation
and $N_{t} = \tau_{t} - \tau_{t-1}$ be the length of the $t$th tour.
Let $h : \sX \to \mathbb{R}$, $V_{i} = h(X_{i})$ and define
\[
S_{t} = \sum_{i=\tau_{t-1}}^{\tau_{t} - 1} V_{i} \quad \text{ for } t
= 1, \ldots, R \; .
\]
The split chain construction ensures that the pairs $(N_t, S_t)$ are
independent and identically distributed.  It is straightforward
to show
\citep{hobe:jone:pres:rose:2002,meyn:twee:2009,mykl:tier:yu:1995} that
if $E_{Q} N_t^2 < \infty$ and $E_{Q} S_t^2 < \infty$, then as $R \to
\infty$,
\begin{equation}
\label{eq:RS SLLN}
\overline{h}_{\tau_{R}}= \frac{\sum_{t=1}^{R} S_{t}}{\sum_{t=1}^{R}
  N_{t}} = \frac{\overline{S}}{\overline{N}} \to E_{\pi} h \qquad
\text{with probability 1}
\end{equation}
and, if $\Gamma =  E_{Q} \left[ (S_{1} - N_{1} E_{\pi} h)^{2}\right] / \left[
  E_{Q}(N_{1}) \right]^{2}$, then
\begin{equation}
\label{eq:RS CLT}
\sqrt{R}( \overline{h}_{\tau_{R}} - E_{\pi} h) \stackrel{d}{\to}
\text{N} (0, \Gamma) \; .
\end{equation}
Moreover, there is an easily calculated consistent estimator of
$\Gamma$; see \citet{hobe:jone:pres:rose:2002}. However, the required
moment conditions, $E_{Q} N_t^2 < \infty$ and $E_{Q} S_t^2 < \infty$,
are difficult to check in practice.  \citet{hobe:jone:pres:rose:2002}
showed that these moment conditions will hold if the Markov chain $X$
is geometrically ergodic and there exists $\delta > 0$ such that
$E_{\pi}|h|^{2+\delta} < \infty$.  Our next result significantly
weakens the required mixing conditions.  The proof can be found in
Appendix~\ref{app:moments}.

\begin{theorem} \label{thm:moments} If $X$ is polynomially ergodic of
  order $m >1$ and there exists $\delta > 2 /(m-1)$ such that
  $E_{\pi} |h|^{2+\delta} < \infty$, then $E_{Q} N_{t}^2 < \infty$ and
  $E_{Q} S_t^2 < \infty$.
\end{theorem}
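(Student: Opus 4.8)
I would prove the two moment bounds separately, in each case reducing the claim to the behaviour of the chain over a single regeneration tour started from the small measure $Q$; since the pairs $(N_{t},S_{t})$ are i.i.d.\ it suffices to treat $t=1$. The bound $E_{Q}N_{1}^{2}<\infty$ I would obtain from the standard correspondence between polynomial convergence rates and polynomial moments of the regeneration time (Tuominen and Tweedie; Nummelin and Tuominen; Jarner and Roberts): polynomial ergodicity of order $m$ furnishes $E_{Q}N_{1}^{\,p}<\infty$ for every $p<m+1$, so $m>1$ immediately gives the second moment. A by-product I will need for the harder part is the tail estimate $\Pr_{x}(\tau_{1}>k)\lesssim k^{-p}$ (Markov's inequality), and, integrating against $\pi$ and passing to the forward recurrence time of the stationary regeneration process, $\Pr_{\pi}(\tau_{1}>k)\lesssim k^{-\rho}$ for every $\rho<m$.

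The substance is $E_{Q}S_{1}^{2}<\infty$. Writing $S_{1}=\sum_{k=0}^{\tau_{1}-1}h(X_{k})$, I would pass to $|h|$, expand the square into its diagonal and off-diagonal parts, and apply the occupation identity $E_{Q}\big[\sum_{k=0}^{\tau_{1}-1}g(X_{k})\big]=\pi(g)/E_{\pi}s$ (valid for $g\ge0$, and the regenerative representation of $\pi$). The diagonal is then simply $\pi(h^{2})/E_{\pi}s$, finite because $E_{\pi}|h|^{2+\delta}<\infty$ forces $\pi(h^{2})<\infty$. For the off-diagonal part I would condition at the earlier time via the Markov property, collapse the inner sum into the expected remaining tour sum, and use the occupation identity once more to obtain
\begin{equation*}
E_{Q} S_{1}^{2} \;\le\; E_{Q}\Big[\Big(\sum_{k=0}^{\tau_{1}-1}|h(X_{k})|\Big)^{2}\Big] \;=\; \frac{1}{E_{\pi}s}\Big(2\,\pi\big(|h|\,\tilde{G}\big)-\pi(h^{2})\Big),\qquad \tilde{G}(x):=E_{x}\Big[\sum_{k=0}^{\tau_{1}-1}|h(X_{k})|\Big].
\end{equation*}
Thus everything reduces to controlling $\pi(|h|\,\tilde{G})$. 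Rather than Cauchy--Schwarz, I would apply H\"older with the conjugate exponents $2+\delta$ and $q:=(2+\delta)/(1+\delta)$, so that $\pi(|h|\,\tilde{G})\le\|h\|_{L^{2+\delta}(\pi)}\,\|\tilde{G}\|_{L^{q}(\pi)}$; the first factor is finite by hypothesis, and the problem is now exactly to show $\tilde{G}\in L^{q}(\pi)$ with $q<2$.

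The remaining step, and the main obstacle, is the $L^{q}(\pi)$ bound on $\tilde{G}=\sum_{k\ge0}\tilde{P}^{k}|h|$, where $\tilde{P}(x,dy):=P(x,dy)-s(x)Q(dy)$ is the taboo kernel and $\tilde{P}^{k}|h|(x)=E_{x}[\,|h(X_{k})|\,I(\tau_{1}>k)\,]$; this is where the hypothesis $\delta>2/(m-1)$ is consumed. By Minkowski, $\|\tilde{G}\|_{L^{q}(\pi)}\le\sum_{k}\|\tilde{P}^{k}|h|\|_{L^{q}(\pi)}$, and the aim is to show each summand is dominated by a power of the stationary return-time tail. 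Using that $\tilde{P}^{k}$ is sub-Markov with $\pi\tilde{P}^{k}\le\pi$, a conditional Jensen step ($q\ge1$) together with a H\"older split that balances $|h|^{2+\delta}$ against $\Pr_{\cdot}(\tau_{1}>k)$ leads to a bound of the shape $\|\tilde{P}^{k}|h|\|_{L^{q}(\pi)}\lesssim \Pr_{\pi}(\tau_{1}>k)^{\delta/(2+\delta)}$. Inserting $\Pr_{\pi}(\tau_{1}>k)\lesssim k^{-\rho}$ with $\rho$ up to $m$, the series $\sum_{k}k^{-\rho\delta/(2+\delta)}$ converges precisely when $\rho\delta/(2+\delta)>1$, i.e.\ when $m-1>2/\delta$, which is exactly $\delta>2/(m-1)$.

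The delicate point is that this threshold is attained only by a \emph{non-wasteful} pairing of the moment and mixing exponents: the naive estimates (for instance bounding $S_{1}^{2}\le N_{1}\sum_{k<\tau_{1}}h(X_{k})^{2}$, or using Cauchy--Schwarz in place of the sharp H\"older exponent $q=(2+\delta)/(1+\delta)$) lose a factor and only deliver the strictly stronger condition $\delta>4/(m-1)$. I would therefore expect the bulk of the work to lie in this last estimate, including the boundary bookkeeping when $\delta>1$ (equivalently small $m$), where the power-mean comparison in the H\"older split reverses and the return-time tail must be fed in through a slightly different exponent. Once $\|\tilde{G}\|_{L^{q}(\pi)}<\infty$ is established, combining it with the diagonal term and with $E_{Q}N_{1}^{2}<\infty$ closes the argument and yields both asserted finite second moments.
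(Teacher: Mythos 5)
Your argument is correct in substance but follows a genuinely different route from the paper. The paper's proof is a two-line reduction to the literature: polynomial ergodicity of order $m$ gives $\sum_n n^{\epsilon+2/\delta}\alpha(n)<\infty$ for some $\epsilon>0$ via $\beta(n)\le\psi(n)E_\pi M$, and then Proposition 3.1 and Corollary 3.5 of Samur (2004) are invoked to conclude $E_Q N_1^{2+\epsilon+2/\delta}<\infty$ and $E_Q|S_1|^{p_1}<\infty$ for some $p_1>2$. You instead reprove the second-moment bound from scratch: the regeneration decomposition $E_Q T^2=\bigl(2\pi(|h|\tilde G)-\pi(h^2)\bigr)/E_\pi s$ with $T=\sum_{k<\tau_1}|h(X_k)|$, H\"older with the conjugate pair $\bigl(2+\delta,(2+\delta)/(1+\delta)\bigr)$, Minkowski over the taboo iterates, and the estimate $\|\tilde P^k|h|\|_{L^q(\pi)}\lesssim\Pr_\pi(\tau_1>k)^{\delta/(2+\delta)}$, summed against $\Pr_\pi(\tau_1>k)\lesssim k^{-\rho}$ for $\rho<m$, which recovers exactly the threshold $\delta>2/(m-1)$. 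I have checked the H\"older bookkeeping and it closes. This is essentially the computation hiding inside Samur's Corollary 3.5, and your version has the virtue of making visible where the exponent $2/(m-1)$ is consumed; the paper's version buys brevity at the cost of opacity. (Your worry about a case split at $\delta>1$ is unnecessary: the step $\Pr_x(\tau_1>k)^{(1+\delta)/\delta}\le\Pr_x(\tau_1>k)$ in the second H\"older application holds for all $\delta>0$.)

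The one input you should not treat as free is the pair of facts $E_Q N_1^p<\infty$ for all $p<m+1$ and $\Pr_\pi(\tau_1>k)\lesssim k^{-\rho}$ for all $\rho<m$. The naive Tuominen--Tweedie atom-to-atom duality delivers tour-length moments only up to order $m$, which would force $m>2$ for the first conclusion and degrade your threshold to $\delta>2/(m-2)$. The extra unit of moment is precisely what Samur's Proposition 3.1 supplies (the same gain of $2$ over the summability exponent of $\alpha(n)$ that the paper uses), and it comes from the relation between the $\pi$-started delay distribution and the $Q$-started tour length rather than from the duality alone. Provided you prove or correctly cite that version, your argument is complete and matches the theorem as stated.
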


In the sequel we use Theorem~\ref{thm:moments} to develop an RS-based
CLT for quantiles.

\subsection{Quantile estimation}
Recall $ Y = \{Y_{0}, Y_{1}, \ldots \} =\{ g(X_{0}), g(X_{1}), \ldots
\}$ and define
\[
S_{t} (y) = \sum_{i=\tau_{t-1}}^{\tau_{t} - 1} I(Y_{i} \le y) \quad \text{ for } t = 1, \ldots, R \; .
\]
Note that $0 \le S_{t} (y) \le N_{t}$ for all $y \in \R$, and hence
$E_{Q} (S_t (y))^2 \le E_{Q} (N_t)^2$.  For each $y \in \mathbb{R}$ set
\[
\Gamma (y) = E_{Q} \left[ \left( S_{1} (y) - F_{V}(y) N_{1} \right)^2 \right] / \left[ E_{Q} (N_{1}) \right]^2 \, ,
\]
which exists under the conditions of Theorem~\ref{thm:moments}.

Let $j = \tau_{R} q + o(\sqrt{\tau_R})$ as $R \to \infty$ and consider
estimating $\xi_{q}$ with $Y_{\tau_{R} (j)}$, that is, the $j$th order
statistic of $Y_{1}, \ldots, Y_{\tau_{R}}$. The proof of the following
CLT is given in Appendix~\ref{app:RS CLT}.

\begin{theorem} \label{thm:clt regen} If $X$ is polynomially
  ergodic of order $m > 1$, then, as $R \to \infty$,
\begin{equation*}
  \sqrt{R} \left( Y_{\tau_{R} (j)} - \xi_{q} \right) \stackrel{d}{\rightarrow} \mathrm{N} \left( 0, \Gamma \left( \xi_{q} \right) / f_{V}^2 \left( \xi_{q} \right) \right) \; .
\end{equation*}
\end{theorem}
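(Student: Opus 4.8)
The plan is to run the classical ``inversion'' argument for sample quantiles inside the regenerative framework, where the i.i.d.\ tour structure makes the analysis short. Write $\widehat{F}_{\tau_R}(y) = \tau_R^{-1}\sum_{t=1}^{R} S_t(y)$ for the empirical distribution function assembled from the $\tau_R$ observations collected over the $R$ tours, fix $t \in \R$, and set $y_R = \xi_q + t/\sqrt{R}$. Pathwise, the elementary order-statistic identity gives
\[
\{\, Y_{\tau_R(j)} \le y_R \,\} = \Big\{ \textstyle\sum_{i} I(Y_i \le y_R) \ge j \Big\} = \{\, \widehat{F}_{\tau_R}(y_R) \ge j/\tau_R \,\},
\]
so that, after centering by $F_V(y_R)$ and multiplying by $\sqrt{R}$,
\[
\Pr\!\left(\sqrt{R}(Y_{\tau_R(j)} - \xi_q) \le t\right) = \Pr\!\left( \sqrt{R}\big(\widehat{F}_{\tau_R}(y_R) - F_V(y_R)\big) \ge \sqrt{R}\big( j/\tau_R - F_V(y_R) \big) \right).
\]
I would analyze the two sides of the inner inequality separately and finish with Slutsky's theorem.

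The right-hand threshold splits as $\sqrt{R}(j/\tau_R - q) + \sqrt{R}(q - F_V(y_R))$. Because $j = \tau_R q + o(\sqrt{\tau_R})$ and, by the strong law applied to the i.i.d.\ tour lengths, $\tau_R/R \to E_Q N_1$ almost surely, the first piece equals $\sqrt{R/\tau_R}\,o(1) \to 0$; the second equals $-\sqrt{R}\big(F_V(\xi_q + t/\sqrt{R}) - F_V(\xi_q)\big) \to -f_V(\xi_q)\,t$ since $F_V$ is differentiable at $\xi_q$ with derivative $f_V(\xi_q)$, which the hypothesis on $f_V$ supplies. Hence the threshold converges in probability to the constant $-f_V(\xi_q)t$.

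For the left-hand side I would show $\sqrt{R}\big(\widehat{F}_{\tau_R}(y_R) - F_V(y_R)\big) \stackrel{d}{\to} \mathrm{N}(0,\Gamma(\xi_q))$. Put $Z_t(y) = S_t(y) - F_V(y)N_t$; these are i.i.d.\ and mean zero across $t$ by the regenerative mean identity $E_Q S_1(y) = F_V(y) E_Q N_1$ underlying \eqref{eq:RS SLLN}, and $\sqrt{R}(\widehat{F}_{\tau_R}(y) - F_V(y)) = (R/\tau_R)\,R^{-1/2}\sum_{t=1}^{R} Z_t(y)$. Evaluated at the fixed point $\xi_q$ this is precisely the regenerative CLT \eqref{eq:RS CLT} with $h = I(\cdot \le \xi_q)$, whose limit is $\mathrm{N}(0,\Gamma(\xi_q))$; the needed moments hold because $0 \le S_t(\xi_q) \le N_t$ and $E_Q N_t^2 < \infty$ by Theorem~\ref{thm:moments} (taken with $h \equiv 1$, so only $m>1$ is required). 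The hard part will be the local smoothing step, namely $R^{-1/2}\sum_{t=1}^{R}\big(Z_t(y_R) - Z_t(\xi_q)\big) \stackrel{p}{\to} 0$, which guarantees that sliding the argument from $\xi_q$ to the moving point $y_R$ leaves the limit unchanged. The tour structure pays off here: for each $R$ the increments form an i.i.d.\ mean-zero triangular array, so the increment's variance is $E_Q\big(S_1(y_R) - S_1(\xi_q) - (F_V(y_R)-F_V(\xi_q))N_1\big)^2$, and since $|S_1(y_R) - S_1(\xi_q)| \le N_1$ with $E_Q N_1^2 < \infty$, dominated convergence (the integrand tends to $0$ pointwise as $y_R \to \xi_q$ and is dominated by a fixed multiple of $N_1^2$) drives this variance to $0$; Chebyshev's inequality then yields the claim.

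Combining the two sides by Slutsky, the probability above converges to $\Pr(W \ge -f_V(\xi_q)t)$ with $W \sim \mathrm{N}(0,\Gamma(\xi_q))$, which by symmetry of the Normal equals $\Phi\big(f_V(\xi_q)t/\sqrt{\Gamma(\xi_q)}\big) = \Phi\big(t/\sqrt{\Gamma(\xi_q)/f_V^2(\xi_q)}\big)$, the distribution function of $\mathrm{N}\!\big(0, \Gamma(\xi_q)/f_V^2(\xi_q)\big)$ evaluated at $t$. Since $t$ was arbitrary and the limit law is continuous, this establishes the asserted convergence in distribution. The only genuinely delicate point is the equicontinuity estimate of the preceding paragraph; the randomness of $\tau_R$ and of the index $j$ is absorbed throughout by the tour-length law of large numbers.
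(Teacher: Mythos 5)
Your proof is correct, and its skeleton --- inverting the order-statistic event, centering the empirical distribution function, analyzing the random threshold, and closing with Slutsky --- is the same as the paper's. The interesting divergence is in the one genuinely delicate step, the CLT for $\sqrt{R}\bigl(\widehat{F}_{\tau_R}(y_R)-F_V(y_R)\bigr)$ at the moving point $y_R=\xi_q+t/\sqrt{R}$. The paper asserts asymptotic normality at $y_R$ with variance $\Gamma(y_R)$ and then invokes a separate continuity lemma (Lemma~\ref{lem:cont}) to replace $\Gamma(y_R)$ by $\Gamma(\xi_q)$; strictly speaking that first assertion is a triangular-array statement which the fixed-function regenerative CLT~\eqref{eq:RS CLT} does not immediately deliver. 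You instead prove the CLT at the fixed point $\xi_q$ (where \eqref{eq:RS CLT} applies verbatim, with the moment conditions supplied by Theorem~\ref{thm:moments} since the summands are bounded by $N_t$) and then kill the increment $R^{-1/2}\sum_t\bigl(Z_t(y_R)-Z_t(\xi_q)\bigr)$ by an exact second-moment computation on the i.i.d.\ tours, dominated convergence against $4N_1^2$, and Chebyshev. This buys you two things: you never need the continuity of $\Gamma$, and your threshold analysis uses only the first-order expansion $F_V(\xi_q+t/\sqrt{R})=q+tf_V(\xi_q)/\sqrt{R}+o(R^{-1/2})$, so the hypothesis that $f_V$ be differentiable at $\xi_q$ (which the paper consumes in a second-order Taylor expansion with remainder $f_V'(\zeta)$) is not actually used --- differentiability of $F_V$ at $\xi_q$, already guaranteed by the standing assumptions, suffices. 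Two small points you should still record: pointwise convergence $S_1(y_R)\to S_1(\xi_q)$ from the left requires that almost surely no $Y_i$ in a tour equals $\xi_q$, which follows from absolute continuity of $F_V$ since the expected occupation of $\{g=\xi_q\}$ over a tour is $E_Q N_1\cdot\pi\bigl(g^{-1}(\xi_q)\bigr)=0$; and the final passage to the limit of probabilities needs $-tf_V(\xi_q)$ to be a continuity point of the limit law, i.e.\ implicitly $\Gamma(\xi_q)>0$, an assumption the paper's statement also omits.
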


  Since $\hat{\xi}_{\tau_{R},q}$ requires $j$ such that $0 \le j -
  \tau_{R} q < 1$ we have the following corollary.

\begin{corollary} \label{cor:RS} If $X$ is polynomially ergodic of
  order $m > 1$, then, as $R \to \infty$,
\begin{equation*}
  \sqrt{R} ( \hat{\xi}_{\tau_{R},q} - \xi_{q} ) \stackrel{d}{\rightarrow} \mathrm{N} \left( 0, \Gamma \left( \xi_{q} \right) / f_{V}^2 \left( \xi_{q} \right) \right) \; .
\end{equation*}
\end{corollary}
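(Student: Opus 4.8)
The plan is to obtain Corollary~\ref{cor:RS} as an immediate specialization of Theorem~\ref{thm:clt regen}. The only difference between the two statements is that Theorem~\ref{thm:clt regen} permits any order-statistic index $j$ satisfying $j = \tau_{R} q + o(\sqrt{\tau_{R}})$, whereas $\hat{\xi}_{\tau_{R},q}$ equals $Y_{\tau_{R}(j')}$ for the specific index $j'$ defined at \eqref{eq:estimator}, namely the one with $j' - 1 < \tau_{R} q \le j'$. I would therefore simply verify that this $j'$ is an admissible choice of $j$ in the theorem.

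First I would record that $\tau_{R} \to \infty$, and hence $\sqrt{\tau_{R}} \to \infty$, with probability $1$ as $R \to \infty$. This holds because each tour length satisfies $N_{t} \ge 1$, so that $\tau_{R} = \sum_{t=1}^{R} N_{t} \ge R$; alternatively it follows from the strong law at \eqref{eq:RS SLLN}, which gives $\tau_{R}/R \to E_{Q}(N_{1}) \in (0,\infty)$.

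The defining inequality $j' - 1 < \tau_{R} q \le j'$ is equivalent to $0 \le j' - \tau_{R} q < 1$, so that $|j' - \tau_{R} q|/\sqrt{\tau_{R}} < 1/\sqrt{\tau_{R}} \to 0$. Hence $j' = \tau_{R} q + o(\sqrt{\tau_{R}})$, and $j'$ meets the hypothesis imposed on $j$ in Theorem~\ref{thm:clt regen}. Applying that theorem with $j = j'$ then yields $\sqrt{R}(\hat{\xi}_{\tau_{R},q} - \xi_{q}) = \sqrt{R}(Y_{\tau_{R}(j')} - \xi_{q}) \stackrel{d}{\to} \mathrm{N}(0, \Gamma(\xi_{q})/f_{V}^{2}(\xi_{q}))$, which is precisely the assertion.

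There is no genuine obstacle here; the corollary is just Theorem~\ref{thm:clt regen} evaluated at one permissible index. The single point requiring (routine) justification is that the $O(1)$ gap between $j'$ and $\tau_{R} q$ is negligible on the $\sqrt{\tau_{R}}$ scale, which is exactly what $\tau_{R} \to \infty$ supplies.
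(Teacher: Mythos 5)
Your argument is correct and matches the paper's own justification, which notes immediately before the corollary that $\hat{\xi}_{\tau_{R},q}$ corresponds to an index $j$ with $0 \le j - \tau_{R} q < 1$, hence $j = \tau_{R} q + o(\sqrt{\tau_{R}})$, so Theorem~\ref{thm:clt regen} applies directly. Your additional remark that $\tau_{R} \ge R \to \infty$ just makes explicit the routine step the paper leaves implicit.
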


To obtain an MCSE we need to estimate $\gamma_{R}^{2}(\xi_q) :=\Gamma
\left( \xi_{q} \right) / f_{V}^2 \left( \xi_{q} \right) $.  We
substitute $\hat{\xi}_{\tau_{R},q}$ for $\xi_{q}$ and separately
consider $\Gamma(\hat{\xi}_{\tau_{R},q})$ and
$f_{V}(\hat{\xi}_{\tau_{R},q})$.  Of course, we can handle estimating
$f_{V}(\hat{\xi}_{\tau_{R},q})$ exactly as before, so all we need to
concern ourselves with is estimation of
$\Gamma(\hat{\xi}_{\tau_{R},q})$.

We can recognize $\Gamma(y)$ as the variance of an asymptotic Normal
distribution. Let $\hat{F}_{R} (y)=\sum_{t=1}^R S_t (y) / \sum_{t=1}^R
N_t$. Then, using \eqref{eq:RS SLLN}, we have that, with probability
1, as $R \rightarrow \infty$, $\hat{F}_{R}(y) \to F_{V}(y)$ for each
fixed $y$.  Moreover, using \eqref{eq:RS CLT}, for each $y \in \R$, as
$R \rightarrow \infty$,
\begin{equation*}
\sqrt{R} ( \hat{F}_{R} (y) - F_{V} (y) )
\stackrel{d}{\rightarrow} \text{N} \left( 0, \Gamma (y) \right) \; .
\end{equation*}
We can consistently estimate $\Gamma (y)$ for each $y$ with
\[
\hat{\Gamma}_{R}(y)  = \frac{1}{R \bar{N}^{2}} \sum_{t=1}^{R}(S_{t} (y) - \hat{F}_{R} (y) N_t )^{2} \; .
\]
Letting $\hat{f}_{V}(\hat{\xi}_{\tau_{R}, q})$ denote an estimator of
$f_{V}(\hat{\xi}_{\tau_{R}, q})$ we estimate $\gamma_{R}^{2}(\xi_{q})$
with
\[
\hat{\gamma}_{R}^{2}(\hat{\xi}_{\tau_{R}, q}) := \frac{\hat{\Gamma}(\hat{\xi}_{\tau_{R}, q})}{\hat{f}_{V}(\hat{\xi}_{\tau_{R}, q})} \; .
\]
Finally, if $t_{R-1, \alpha/2}$ is a quantile from a Student's $t$
distribution with $R-1$ degrees of freedom, a $100(1 - \alpha)\%$
confidence interval for $\xi_{q}$ is
\begin{equation} \label{eq:ci rs} \hat{\xi}_{\tau_{R},q} \pm t_{R-1,
    \alpha/2} \frac{\hat{\gamma}_{R}(\hat{\xi}_{\tau_{R},
      q})}{\sqrt{R}} \; .
\end{equation}

\section{Examples}
\label{sec:Applications}

In this section, we investigate the finite-sample performance of the
confidence intervals for $\xi_{q}$ defined at \eqref{eq:ci bm},
\eqref{eq:ci sbm}, and \eqref{eq:ci rs} corresponding to BM, SBM and
RS, respectively.  While each of our examples are quite different, the
simulation studies were conducted using a common methodology.  In each
case we perform many independent replications of the MCMC sampler.
Each replication was performed for a fixed number of regenerations,
then confidence intervals were constructed on the same MCMC
output. For the BM-based and SBM-based intervals we always used $b_{n}
= \lfloor n^{1/2} \rfloor$, which has been found to work well in other
settings \citep{jone:hara:caff:neat:2006, fleg:jone:2010, fleg:2012}.
In order to estimate coverage probabilities we require the true values
of the quantiles of interest.  These are available in only one of our
examples.  In the other example we estimate the truth with an
independent long run of the MCMC sampler.  The details are described
in the following sections.

\subsection{Polynomial target distribution}
\label{sec:t}

\cite{jarn:robe:2007} studied MCMC for heavy-tailed target
distributions. A target distribution is said to be {\em polynomial of
  order $r$} if its density satisfies $f(x) = (l(|x|) / |x|)^{1+r}$,
where $r > 0$ and $l$ is a normalized slowly varying function---a
particular example is Student's $t$-distribution. We consider
estimating quantiles of Student's $t$-distribution $t(v)$ for degrees
of freedom $v=3$, 6, and 30; the $t(v)$ distribution is polynomial of
order $v$.  We use a Metropolis random walk algorithm with jump
proposals drawn from a $\text{N}(0, \sigma^2)$ distribution. By
Proposition 3 of \cite{jarn:robe:2007}, a Metropolis random walk for a
$t(v)$ target distribution using any proposal kernel with finite
variance is polynomially ergodic of order $v/2$.  Thus the conditions
of Theorem \ref{thm:clt} and Corollary \ref{cor:RS} are satisfied for
$v > 2$.

We tuned the scale parameter $\sigma^2$ in the proposal distribution
in order to minimize autocorrelation in the resulting chain (second
row of Table \ref{tab:t_example}); the resulting acceptance rates varied
from about 25\% for $t(3)$ with $\sigma = 5.5$, the heaviest tailed
target distribution, to about 40\% for $t(30)$ with $\sigma = 2.5$.
Regeneration times were identified using the retrospective method of
\cite{mykl:tier:yu:1995}; see Appendix \ref{app:regen_details} for
implementation details, and the bottom rows of Table \ref{tab:t_example}
for regeneration performance statistics (mean and SD of tour lengths).
For each of the $10^4$ replications and using each of \eqref{eq:ci
  bm}, \eqref{eq:ci sbm}, and \eqref{eq:ci rs} we computed a 95\%
confidence interval for $\xi_q$ for $q = 0.50$, 0.75, 0.90, and 0.95.

Empirical coverage rates (percentage of the $10^4$ intervals that
indeed contain the true quantile $\xi_q$) are shown in Table
\ref{tab:MetropRW_t}.  We first note that, as might be expected,
agreement with the nominal coverage rate is closer for estimation of
the median than for the tail quantiles $\xi_{.90}$ and $\xi_{.95}$.
As for comparing the three approaches to MCSE estimation, we find that
agreement with the nominal coverage rate is closest for SBM on
average, but SBM also shows the greatest variability between cases
considered, including a couple of instances ($\xi_{.90}$ and
$\xi_{.95}$ for the $t(3)$ target distribution) where the method
appears overly conservative.  Results for BM and RS show less
variability than those of SBM, with agreement with the nominal rate
being slightly better for RS.  
%%%%%%%%%%%%%%%%%
\begin{comment}
It is interesting to note that while
the conditions of Theorem \ref{thm:clt} do not hold for the $t(6)$ or
$t(3)$ target distributions, BM and SBM still appear to be producing
consistent estimators of $\gamma^{2} (\xi_{q})$, as agreement with the
nominal rate improves from  $R = 500$ to $R = 2000$.
This suggests the possibility that our theoretical results might be
improved upon, i.e.~a Markov chain CLT for quantiles may well hold
under weaker conditions than those required for our Theorem
\ref{thm:clt}.
\end{comment}
%%%%%%%%%%%%%%%%%%

Table \ref{tab:CI_width} shows the mean and standard deviation of
interval half-widths for the three cases (defined by the quantile $q$
and number of regenerations $R$) in which all empirical coverage rates
were at least 0.935.  The most striking result here is the huge
variability in the standard errors as computed by SBM, particularly
for the heaviest tailed target distribution.  Results for BM and RS
are comparable, with RS intervals being slightly wider and having
slightly less variability.  The SBM intervals are generally as wide or
wider, demonstrating again the apparent conservatism of the method.

\begin{table}
\begin{center}
\begin{tabular}{lccc} 
\hline
 & \multicolumn{3}{c}{Target distribution} \\ 
 & ~$t(30)$~ & ~$t(6)$~ & ~$t(3)$~ \\
\hline
%MCSE estimation                & Gen & RS & RS \\ 
Tuning parameter $\sigma$~~~~  & 2.5 & 3.5 & 5.5 \\ 
Mean tour length               & 3.58 & 4.21 & 5.60 \\
SD of tour lengths             & 3.14 & 3.80 & 5.23 \\
\hline
\end{tabular}
\caption{Metropolis random walk on $t(v)$ target distribution with
  $\text{N}(0, \sigma^2)$ jump proposals, example of Section~\ref{sec:t}.  
}
\label{tab:t_example} 
\end{center}
\end{table} 

%%%%%%%%%%%%%%%
\begin{comment}
In
  first row of table, ``Gen'' indicates polynomial ergodicity of order
  $m > 11$, guaranteeing the conditions of both Theorem~\ref{thm:clt}
  and Corollary~\ref{cor:RS}; ``RS'' indicates $m >1$, guaranteeing
  the conditions of Corollary~\ref{cor:RS}.
\end{comment}
%%%%%%%%%%%%%%%%

\begin{table}
\begin{center}
\begin{tabular}{|cc|ccc|ccc|}
\multicolumn{8}{c}
{Estimating $\xi_q$ of $t(v)$ distribution based on Normal Metropolis RW} \\
\hline
 & & \multicolumn{3}{c|}{500 regenerations} 
  & \multicolumn{3}{c|}{2000 regenerations} \\
~Quantile~ & ~Method~ & $t(30)$ & $t(6)$ & $t(3)$ & $t(30)$ & $t(6)$ & $t(3)$ \\
\hline
             & BM  & ~~0.941~ & ~0.939~ & ~0.935~~ & ~~0.946~ & ~0.946~ & ~0.947~~ \\
$~q = 0.50~$ & SBM & ~0.946 & 0.945 & 0.947~ & ~0.948 & 0.949 & 0.950~ \\
             & RS  & ~0.952 & 0.951 & 0.946~ & ~0.951 & 0.950 & 0.952~ \\
\hline
           & BM  & ~0.935 & 0.931 & 0.932~ & ~0.946 & 0.939 & 0.945~ \\
$q = 0.75$ & SBM & ~0.944 & 0.948 & 0.955~ & ~0.948 & 0.948 & 0.961~ \\
           & RS  & ~0.947 & 0.942 & 0.942~ & ~0.951 & 0.944 & 0.951~ \\
\hline
           & BM  & ~0.923 & 0.916 & 0.916~ & ~0.941 & 0.935 & 0.933~ \\
$q = 0.90$ & SBM & ~0.926 & 0.942 & 0.957~ & ~0.948 & 0.955 & 0.976~ \\
           & RS  & ~0.933 & 0.928 & 0.927~ & ~0.945 & 0.940 & 0.940~ \\
\hline
           & BM  & ~0.906 & 0.898 & 0.895~ & ~0.934 & 0.930 & 0.931~ \\
$q = 0.95$ & SBM & ~0.888 & 0.898 & 0.932~ & ~0.935 & 0.956 & 0.972~ \\
           & RS  & ~0.914 & 0.909 & 0.906~ & ~0.938 & 0.936 & 0.935~ \\
\hline
\end{tabular}
 \caption{ Empirical coverage rates for nominal 95\% confidence
   intervals for $\xi_q$, the $q$-quantile of the $t(v)$
   distribution. Based on $n=10^4$ replications of 500 or 2000
   regenerations of a Metropolis random walk with jump proposals drawn
   from a Normal distribution. The Monte Carlo standard errors for the
   observed sample proportions fall between 1.5E-3 and 3.2E-3. }
\label{tab:MetropRW_t}
\end{center}
\end{table}

\begin{table}
\begin{center}
\begin{tabular}{c|ccc}
\multicolumn{4}{l}{$q = 0.50$, $R = 500$} \\
\hline
 & \multicolumn{3}{c}{Target distribution} \\
 ~MCSE Method~ & $~t(30)$ & $t(6)$ & $t(3)~$ \\
\hline
BM  & ~~0.120 (0.022)~  & ~0.127 (0.023)~  & ~0.134 (0.025)~~  \\
SBM & ~0.121 (0.016)  & 0.129 (0.021)  & 0.146 (0.099)~  \\
RS  & ~0.124 (0.015)  & 0.131 (0.017)  & 0.140 (0.020)~  \\
\hline
\end{tabular}
\end{center}
\medskip
\begin{center}
\begin{tabular}{c|ccc}
\multicolumn{4}{l}{$q = 0.50$, $R = 2000$} \\
\hline
 & \multicolumn{3}{c}{Target distribution} \\
 ~MCSE Method~ & $~t(30)$ & $t(6)$ & $t(3)~$ \\
\hline
BM  & ~~0.061 (0.008)~  & ~0.064 (0.008)~  & ~0.068 (0.008)~~  \\
SBM & ~0.060 (0.005)  & 0.064 (0.006)  & 0.072 (0.066)~  \\
RS  & ~0.062 (0.004)  & 0.065 (0.005)  & 0.069 (0.006)~  \\
\hline
\end{tabular}
\end{center}
\medskip
\begin{center}
\begin{tabular}{c|ccc}
\multicolumn{4}{l}{$q = 0.75$, $R = 2000$} \\
\hline
 & \multicolumn{3}{c}{Target distribution} \\
 ~MCSE Method~ & $~t(30)$ & $t(6)$ & $t(3)~$ \\
\hline
BM  & ~~0.066 (0.009)~  & ~0.072 (0.009)~  & ~0.080 (0.011)~~  \\
SBM & ~0.066 (0.006)  & 0.074 (0.012)  & 0.094 (0.095)~  \\
RS  & ~0.067 (0.005)  & 0.073 (0.006)  & 0.082 (0.008)~  \\
\hline
\end{tabular}
\medskip
\label{tab:CI_width}
\caption{{ Mean and standard deviation for half-widths of 95\% confidence 
intervals for $\xi_q$, in $10^4$ replications of Normal Metropolis random 
walk with $R$ regenerations.}}
\label{tab:CI_width}
\end{center}
\end{table}

\subsection{Probit regression} \label{sec:probit}

\citet{vand:meng:2001} report data which is concerned with the
occurrence of latent membranous lupus nephritis.  Let $y_i$ be an
indicator of the disease (1 for present), $x_{i1}$ be the difference
between IgG3 and IgG4 (immunoglobulin G), and $x_{i2}$ be IgA
(immunoglobulin A) where $i=1, \ldots,55$.  Let $\Phi$ denote the
standard normal distribution function and suppose
\[
 \text{Pr} (Y_i = 1 ) = \Phi \left( \beta_0 + \beta_1 x_{i1} + \beta_2 x_{i2} \right)
\]
and take the prior on $\beta := (\beta_0 , \beta_1, \beta_2 )$ to be
Lebesgue measure on $\mathbb{R}^{3}$.  \citet{roy:hobe:2007} show that
the posterior $\pi(\beta | y)$ is proper.  Our goal is to report a
median and an 80\% Bayesian credible region for each of the three
marginal posterior distributions.  Denote the $q$th quantile
associated with the marginal for $\beta_{j}$ as $\xi^{(j)}_{q}$ for $j
= 0,1,2$.  Then the vector of parameters to be estimated is
\[
\Xi = \left( \xi^{(0)}_{.1}, \xi^{(0)}_{.5}, \xi^{(0)}_{.9} , \xi^{(1)}_{.1} , \xi^{(1)}_{.5} , \xi^{(1)}_{.9} , \xi^{(2)}_{.1}, \xi^{(2)}_{.5}, \xi^{(2)}_{.9} \right) \; .
\]

We will sample from the posterior using the PX-DA algorithm of
\citet{liu:wu:1999}, which \citet{roy:hobe:2007} prove is
geometrically ergodic. For a full description of this algorithm in the
context of this example see \citet{fleg:jone:2010} or \citet{roy:hobe:2007}.

We now turn our attention to comparing coverage probabilities for
estimating elements of $\Xi$ based on the confidence intervals at
\eqref{eq:ci bm}, \eqref{eq:ci sbm}, and \eqref{eq:ci rs}.  We
calculated a precise estimate from a long simulation of the PX-DA
chain and declared the observed quantiles to be the truth--see
Table~\ref{tab:truth}. \citet{roy:hobe:2007} implement RS for this
example and we use their settings exactly with 25 regenerations.  This
procedure was repeated for 1000 independent replications resulting in
a mean simulation effort of 3.89E5 (2400).  The resulting coverage
probabilities can be found in Table~\ref{tab:probit}.  Notice that for
the BM and SBM intervals all the coverage probabilities are within two
MCSEs of the nominal 0.95 level.  However, for RS only 7 of the 9
investigated settings are within two MCSEs of the nominal level.  In
addition, all of the results using RS are below the nominal 0.95
level.

Table~\ref{tab:probit} gives the empirical mean and standard deviation
of the half-width of the BM-based, RS-based, and SBM-based confidence
intervals.  Notice the interval lengths are similar across the three
methods, but the RS-based interval lengths are more variable.
Further, the RS-based intervals are uniformly wider on average than
the BM-based intervals even though they have uniformly lower empirical
coverage probabilities.

\begin{table}[H]
\begin{center}
\begin{tabular}{c|ccc}
$q$ & 0.1 & 0.5 & 0.9 \\
\hline
$\beta_0$ & -5.348 (7.21E-03) & -2.692 (4.00E-03) & -1.150 (2.32E-03) \\
$\beta_1$ & 3.358 (4.79E-03) & 6.294 (7.68E-03) & 11.323 (1.34E-02) \\
$\beta_2$ & 1.649 (2.98E-03) & 3.575 (5.02E-03) & 6.884 (8.86E-03) \\
\end{tabular}
\caption{Summary for Probit regression example of calculated ``truth''.  These calculations are based on 9E6 iterations where the MCSEs are calculated using a BM procedure.}
\label{tab:truth}
\end{center}
\end{table}

\begin{table}[H]
\begin{center}
\begin{tabular}{cc|ccc|ccc}
\multicolumn{2}{c}{} & \multicolumn{3}{c}{Probability} & \multicolumn{3}{c}{Half-Width}\\
& $q$ & 0.1 & 0.5 & 0.9 & 0.1 & 0.5 & 0.9\\
\hline
\multirow{3}{6mm}{$\beta_0$} & BM & 0.956 & 0.948 & 0.945 & 0.0671 (0.007) & 0.0377 (0.004) & 0.0222 (0.002) \\
& RS & 0.942 & 0.936 & 0.934 & 0.0676 (0.015) & 0.0384 (0.008) & 0.0226 (0.005) \\
& SBM & 0.952 & 0.947 & 0.955 & 0.0650 (0.006) & 0.0375 (0.004) & 0.0232 (0.003) \\
\hline
\multirow{3}{6mm}{$\beta_1$} & BM & 0.948 & 0.943 & 0.948 & 0.0453 (0.005) & 0.0720 (0.007) & 0.1260 (0.013) \\
& RS & 0.942 & 0.936 & 0.934 & 0.0459 (0.010) & 0.0733 (0.016) & 0.1270 (0.028) \\
& SBM & 0.954 & 0.942 & 0.940 & 0.0464 (0.005) & 0.0716 (0.007) & 0.1230 (0.012) \\
\hline
\multirow{3}{6mm}{$\beta_2$} & BM & 0.949 & 0.950 & 0.950 & 0.0287 (0.003) & 0.0474 (0.005) & 0.0825 (0.009) \\
& RS & 0.938 & 0.940 & 0.937 & 0.0292 (0.006) & 0.0481 (0.010) & 0.0831 (0.018) \\
& SBM & 0.955 & 0.948 & 0.948 & 0.0297 (0.003) & 0.0470 (0.005) & 0.0801 (0.008) \\
\end{tabular}
\caption{Summary for estimated coverage probabilities and observed CI half-widths for Probit regression example.  CIs reported have 0.95 nominal level with MCSEs equal ranging from 6.5E-3 to 7.9E-3.}
\label{tab:probit}
\end{center}
\end{table}

\subsection{A hierarchical random effects model}
\label{sub:baseball}

A well known data set first analyzed by \cite{efro:morr:1975} consists
of the batting averages of 18 Major League Baseball players in their
first 45 official at bats of the 1970 season.  Let $x_i$ denote the
batting average of the $i$th player, and $y_i = \sqrt{45} \arcsin (2
x_i - 1)$, for $i = 1, \ldots, K = 18$.  Since this represents the
variance stabilizing transformation of a binomial distribution, it is
reasonable to suppose that
$$ y_i | \theta_i \sim \mathrm{N}(\theta_i, 1) ~~\text{for}~ i = 1,
\ldots, K \; .
$$ 
Here we consider a hierarchical model proposed by \cite{rose:1996}.
Specifically we further assume that
$$
\theta_1, \ldots, \theta_K  ~\text{are i.i.d.}~ \mathrm{N}(\mu, \lambda)
$$
where
$$
p(\mu, \lambda) \propto \lambda^{-(b+1)} e^{-c/\lambda} I(\lambda > 0) 
$$ with $b$ and $c$ known hyperparameters; thus $\mu$ has the flat
prior and $\lambda$ has an {\em inverse gamma} prior.  This results in
a proper posterior having dimension $K + 2 = 20$.  \cite{rose:1996}
developed a block Gibbs sampler for simulating from the posterior
distribution of $(\theta_1, \ldots, \theta_K, \mu, \lambda)$ and
proved that the resulting Markov chain is geometrically ergodic.
\cite{jone:hara:caff:neat:2006} showed how to implement regenerative
simulation.

Suppose we are interested in estimating the posterior quantiles
$\xi_q^{(i)}$ of a particular $\theta_i$, representing the ``true''
(transformed) batting average of a particular ballplayer.  We conduct
a simulation study to assess the performance of the confidence
intervals at \eqref{eq:ci bm}, \eqref{eq:ci sbm}, and \eqref{eq:ci
  rs}, corresponding to BM, SBM, and RS, respectively.

\cite{jone:hara:caff:neat:2006} showed how to simulate independent draws from the 
posterior distribution via rejection sampling.  Setting hyperparameter values at 
$b = c = 2$, we generated 2E7 iterations of the rejection sampler to estimate the quantiles 
$\xi_q^{(9)}$---the 9th player in \pcite{efro:morr:1975} data set was Ron Santo of the 
Chicago Cubs---and obtained the %following 
quantiles summarized in Table \ref{tab:baseball_truth}.  
%We will take these values as the ``truth'' below.
We then ran 5000 replications of \pcite{rose:1996} Gibbs sampler for 50 regenerations each.  
Using the regeneration recipe of \cite{jone:hara:caff:neat:2006}, the mean tour length 
was about 28 updates, with a standard deviation of approximately 28 as well.  For each realized 
chain, we computed 95\% confidence intervals for $\xi_q^{(9)}$ using each of \eqref{eq:ci bm}, 
\eqref{eq:ci sbm}, and \eqref{eq:ci rs}.  %to estimate the quantiles in Table \ref{tab:baseball_truth}.  
Empirical coverage rates (with the values in Table \ref{tab:baseball_truth} taken as the 
``truth'') are reported in Table 
\ref{tab:baseball_cov}, and interval half-widths are summarized in Table \ref{tab:baseball_width}.  

\begin{table}
\begin{center}
\begin{tabular}{c|ccccc}
$q$~ &~ 0.1 & 0.3 & 0.5 & 0.7 & 0.9 \\
\hline
$\xi_q^{(9)}$~ & ~-4.278 & -3.771 & -3.428 & -3.087 & -2.590 \\
MCSE~ & ~(2.6E-4) & (1.9E-4) & (1.8E-4) & (1.9E-4) & (2.5E-4) \\
\end{tabular}
\caption{Monte Carlo estimates of posterior quantiles for $\theta_9$ in example of Section 
\ref{sub:baseball}, taken as the ``truth'' in subsequent analysis.  Based on 2E7 independent 
draws.  }
\label{tab:baseball_truth}
\end{center}
\end{table}

\begin{table}
\begin{center}
\begin{tabular}{|c|ccccc|}
\hline
 & \multicolumn{5}{c|}{$q$} \\
 Method & 0.1 & 0.3 & 0.5 & 0.7 & 0.9 \\
 \hline
 BM & 0.936 & 0.939 & 0.942 & 0.944 & 0.934 \\
 SBM & 0.941 & 0.937 & 0.939 & 0.940 & 0.941 \\
 RS &  0.932 & 0.938 & 0.940 & 0.940 & 0.931 \\
 \hline
 \end{tabular}
 \caption{Empirical coverage rates of nominal 95\% confidence intervals for $\xi_q^{(9)}$ in 
 example of Section \ref{sub:baseball}.  Based on 5000 simulations, MCSEs range from 
 3.3E-3 to 3.6E-3.    }
 \label{tab:baseball_cov}
 \end{center}
 \end{table}
 
 \begin{table}
 \begin{center}
 \begin{tabular}{|c|ccc|}
 \hline
  & \multicolumn{3}{c|}{Method} \\
$q$ & BM & SBM & RS \\
\hline   
0.1 & 0.0650 (0.010) & 0.0656 (0.008) & 0.0651 (0.011) \\
0.3 & 0.0514 (0.008) & 0.0506 (0.006) & 0.0519 (0.008) \\
0.5 & 0.0490 (0.007) & 0.0479 (0.006) & 0.0494 (0.008) \\
0.7 & 0.0507 (0.007) & 0.0497 (0.006) & 0.0511 (0.008) \\
0.9 & 0.0623 (0.009) & 0.0631 (0.008) & 0.0629 (0.011) \\
\hline
  \end{tabular}
  \caption{Mean (and standard deviation) of CI half-widths for nominal 95\% confidence intervals 
  for $\xi_q^{(9)}$ in example of Section \ref{sub:baseball}, based on 5000 replications.}
  \label{tab:baseball_width}
  \end{center}
  \end{table}

\section{Discussion}
\label{sec:Discussion}

We have focused on assessing the Monte Carlo error for estimating
quantiles in MCMC settings.  In particular, we established quantile
CLTs and considered using batch means, subsampling and regenerative
simulation to estimate the variance of the asymptotic Normal
distributions.  We also studied the finite-sample properties of the
resulting confidence intervals in the context of three examples.

Overall, the finite-sample properties were comparable across the three
variance estimation techniques considered.  However, SBM required
substantially more computational effort because it orders each of the
$n-b+1$ overlapping blocks to obtain the quantile estimates.  For
example, we ran a three dimensional probit regression Markov chain
(Section~\ref{sec:probit}) for $2 \times 10^5$ iterations and
calculated an MCSE for the median of the three marginals.  The BM
calculation took 0.37 seconds while the SBM calculation took 84.04
seconds, or 227 times longer.

The conditions required in the CLT in Theorem~\ref{thm:clt} are the
same as those required in the CLT of Theorem~\ref{thm:clt regen}.
However, RS requires stronger conditions in the sense that it requires
the user to establish a \textit{useful} minorization condition
\eqref{eq:minor.gen}.  Although minorization conditions are often
nearly trivial to establish, they are seen as a substantial barrier by
practitioners because they require a problem-specific approach.
Alternatively, it is straightforward to implement the BM-based and
SBM-based approaches in general software--see the recent \verb@mcmcse@
R package \citep{fleg:hugh:2012} which implements the methods of this
paper.

\begin{appendix}

\section{Preliminaries:  Markov chains as mixing processes}
\label{sec:mixing}

Let $S = \{S_{n}\}$ be a strictly stationary stochastic process on a probability space $(\Omega, {\mathcal F}, P)$ and set ${\mathcal F}_{k}^{l} = \sigma(S_{k}, \ldots, S_{l})$.  Define the $\alpha$-mixing coefficients for $n=1,2,3,\ldots$ as
\[ 
\alpha(n) = \sup_{k \ge 1} \sup_{A \in {\mathcal F}_{1}^{k}, \, B \in {\mathcal F}_{k+n}^{\infty}} | P(A \cap B) - P(A) P(B) | \; .
\]
Let $f : \Omega \to \mathbb{R}$ be Borel.  Set $T = \{ f(S_{n}) \}$
and let $\alpha_{T}$ and $\alpha_{S}$ be the $\alpha$-mixing
coefficients for $T$ and $S$, respectively.  Then by elementary
properties of sigma-algebras \citep[cf.][p. 16]{chow:teic:1978}
$\sigma(T_{k}, \ldots, T_{l}) \subseteq \sigma(S_{k}, \ldots, S_{l}) =
{\mathcal F}_{k}^{l}$ and hence $\alpha_{T}(n) \le \alpha_{S}(n)$ for
all $n$.

Define the $\beta$-mixing coefficients for $n=1,2,3,\ldots$ as
\[
\beta(n) = \underset{B_{1}, \ldots, B_{J} \text{ partition }
  \Omega}{\underset{B_{1}, \ldots, B_{J} \in {\mathcal
   F}_{m+n}^{\infty}}{\underset{A_{1}, \ldots, A_{I} \text{ partition }
      \Omega}{\underset{A_{1}, \ldots, A_{I} \in {\mathcal
          F}_{1}^{m}}{\underset{m \in \mathbb{N}}{\sup}}}}}
\frac{1}{2}\sum_{i=1}^{I} \sum_{j=1}^{J} \left| P(A_{i} \cap B_{j}) -
  P(A_{i}) P(B_{j})\right| \; .
\]
If $\beta(n) \to 0$ as $n \to \infty$, we say that $S$ is
\textit{$\beta$-mixing} while if $\alpha(n) \to 0$ as $n \to \infty$,
we say that $S$ is \textit{$\alpha$-mixing}.  It is easy to prove that
$2 \alpha(n) \le \beta(n)$ \citep[see][for discussion of this and other inequalities]{brad:1986} for all $n$
so that $\beta$-mixing implies $\alpha$-mixing.

Let $X$ be a stationary Harris ergodic Markov chain on $(\sX,
\mathcal{B}(\sX))$, which has invariant distribution $\pi$.  In this
case the expressions for the $\alpha$- and $\beta$-mixing coefficients
can be simplified
\[
\alpha(n) = \sup_{A, \, B \in {\mathcal B}} \left| \int_{A} \pi(dx)
P^{n}(x, B) - \pi(A) \pi(B) \right| 
\]
while 
\citet{davy:1973} showed that
\begin{equation}
\label{eq:beta mixing}
\beta(n) = \int_{\sX} \|P^{n}(x, \cdot) - \pi(\cdot) \| \pi(dx)
\; .
\end{equation}

\begin{theorem}
\label{thm:basic mixing}
A stationary Harris ergodic Markov chain is $\beta$-mixing, hence
$\alpha$-mixing.  In addition, if \eqref{eq:tvn} holds, then $\beta(n)
\le \psi(n) E_{\pi}M$ for all $n$.
\end{theorem}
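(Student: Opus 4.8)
The plan is to work directly from Davydov's representation \eqref{eq:beta mixing} of the $\beta$-mixing coefficient, since the $\alpha$-mixing conclusion then follows immediately from the inequality $2\alpha(n) \le \beta(n)$ already recorded above, and the quantitative bound is then just a matter of substituting \eqref{eq:tvn} into \eqref{eq:beta mixing}.

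First, to establish $\beta$-mixing, I would argue that the integrand in \eqref{eq:beta mixing} tends to zero pointwise and then pass to the limit under the integral. Because $X$ is Harris ergodic with invariant distribution $\pi$, the fundamental convergence theorem for such chains \citep[see e.g.][]{meyn:twee:2009} gives $\|P^{n}(x,\cdot) - \pi(\cdot)\| \to 0$ as $n \to \infty$ for every $x \in \sX$. The integrand is bounded above uniformly in $n$ and $x$, since the total variation norm of a difference of two probability measures is at most a fixed constant, and this constant bound is $\pi$-integrable because $\pi$ is a probability measure. An application of the dominated convergence theorem to \eqref{eq:beta mixing} then yields $\beta(n) \to 0$, so $X$ is $\beta$-mixing, and the $\alpha$-mixing claim is immediate from $2\alpha(n) \le \beta(n)$.

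For the second claim, I would simply insert the hypothesis \eqref{eq:tvn} into \eqref{eq:beta mixing}:
\[
\beta(n) = \int_{\sX} \|P^{n}(x,\cdot) - \pi(\cdot)\| \, \pi(dx) \le \int_{\sX} M(x)\,\psi(n) \, \pi(dx) = \psi(n) \, E_{\pi} M \; ,
\]
where pulling $\psi(n)$ outside the integral is justified because it does not depend on $x$, and the resulting integral is finite by the standing assumption $E_{\pi} M < \infty$.

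The argument is essentially a direct application of \eqref{eq:beta mixing}, so I do not anticipate a serious obstacle; the only point requiring care is the pointwise convergence of the total variation distance, which for a Harris ergodic chain holds for every starting state rather than merely $\pi$-almost everywhere, and which I would invoke as a standard result.
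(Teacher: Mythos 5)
Your proof is correct, but it takes a different route from the paper: where you derive both claims directly from Davydov's identity \eqref{eq:beta mixing}, the paper simply cites the literature, attributing the first assertion to Theorem 4.3 of \citet{brad:1986} and the second to the proof of Theorem 2 in \citet{chan:geye:1994}. Your argument is essentially a self-contained reconstruction of what those references contain: pointwise convergence of $\|P^{n}(x,\cdot)-\pi(\cdot)\|$ for a Harris ergodic chain (the fundamental convergence theorem of \citet{meyn:twee:2009}) plus dominated convergence gives $\beta(n)\to 0$, the inequality $2\alpha(n)\le\beta(n)$ gives $\alpha$-mixing, and substituting \eqref{eq:tvn} into \eqref{eq:beta mixing} gives the quantitative bound. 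All steps are sound; note that for the dominated convergence step even $\pi$-almost-everywhere convergence of the total variation distance would suffice, so the care you take over ``every $x$ versus $\pi$-a.e.\ $x$'' is not actually load-bearing. What your approach buys is transparency --- the reader sees exactly why the result holds and where each hypothesis enters --- at the cost of implicitly re-proving results the paper prefers to outsource; the paper's citation-only proof is shorter but leaves the mechanism opaque.
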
 

\begin{proof}
  The first part is Theorem 4.3 of \citet{brad:1986} while the second
  part can be found in the proof of Theorem 2 in
  \citet{chan:geye:1994}.
\end{proof}
Since $2 \alpha(n) \le \beta(n)$ we observe that
Theorem~\ref{thm:basic mixing} ensures that if $p \ge 0$, then
\begin{equation}
\label{eq:tv to mixing}
\sum_{n=1}^{\infty} n^{p} \psi(n) < \infty \hspace*{4mm} \text{
  implies } \hspace*{4mm} \sum_{n=1}^{\infty} n^{p} \alpha(n) < \infty
\; .
\end{equation}

\section{Proofs}

\subsection{Proof of Proposition~\ref{prop:wangetal}}
\label{app:wangetal}

We begin by showing that we can weaken the conditions of Lemma 3.3 in \citet{wang:hu:yang:2011}.

\begin{lemma} \label{lem:wangetal}
  Let $S=\{S_{n}\}$ be a stationary $\alpha$-mixing process such that
  $\alpha_{S}(n) \le C n^{-\beta}$ for some $\beta > 1$ and positive
  finite constant $C$. Assume the common marginal distribution
  function $F$ is absolutely continuous with continuous density
  function $f$ such that $0 < f(\xi_{q}) < \infty$. For any $\theta>0$
  and $\delta \in (9/(10+8\beta), \, 1/2)$ there exists $n_{0}$ so
  that if $n \ge n_{0}$ then with probability 1
\[
|\hat{\xi}_{n, q} - \xi_{q} | \le \frac{\theta(\log \log n )^{1/2}}{f(\xi_{q}) n^{1/2 - \delta}} \; .
\]
\end{lemma}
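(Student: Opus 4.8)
The plan is to pass from the sample quantile to the empirical distribution function and then apply an exponential inequality for $\alpha$-mixing partial sums. Write $\epsilon_n := \theta (\log\log n)^{1/2} / (f(\xi_q) n^{1/2-\delta})$ and let $F_n$ be the empirical distribution function of $S_0,\dots,S_{n-1}$. First I would record the elementary order-statistic inclusions: the event $\{\hat\xi_{n,q} > \xi_q + \epsilon_n\}$ forces fewer than $nq$ of the observations to lie at or below $\xi_q + \epsilon_n$, so it is contained in $\{F_n(\xi_q+\epsilon_n) < q\}$, and symmetrically $\{\hat\xi_{n,q} < \xi_q - \epsilon_n\} \subseteq \{F_n(\xi_q-\epsilon_n) \ge q\}$ (using monotonicity of $F_n$). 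Since $F(\xi_q) = q$ and $f$ is continuous at $\xi_q$, a first-order expansion gives $F(\xi_q \pm \epsilon_n) - q = \pm f(\xi_q)\epsilon_n + o(\epsilon_n)$, so for all $n$ beyond some $n_0$ (depending on $\theta$) each of these events forces a centered deviation
\[
\bigl| F_n(\xi_q \pm \epsilon_n) - F(\xi_q \pm \epsilon_n) \bigr| \ge \tfrac12 f(\xi_q)\epsilon_n = \tfrac{\theta}{2}(\log\log n)^{1/2} n^{-1/2+\delta} .
\]

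Next I would bound the probability of such a deviation. For a fixed $y$ the variables $Z_i := I(S_i \le y) - F(y)$ are stationary, mean zero, bounded by $1$ in absolute value, and $\alpha$-mixing with coefficients no larger than $\alpha_S(n)$ (functions of $S_i$ cannot increase the mixing coefficients, as noted in the mixing preliminaries). Hence $F_n(y) - F(y) = n^{-1}\sum_i Z_i$ is amenable to Bosq's exponential inequality for bounded $\alpha$-mixing sequences---the same tool that underlies Theorem~\ref{thm:complete bosq}. With $t := \tfrac{\theta}{2}(\log\log n)^{1/2} n^{-1/2+\delta}$ and an integer block count $a \in [1, n/2]$ it yields a bound of the form
\[
\Pr\bigl(|F_n(y) - F(y)| > t\bigr) \le 4\exp\Bigl(-\tfrac{t^2 a}{8}\Bigr) + 22\,a\Bigl(1 + \tfrac{4}{t}\Bigr)^{1/2} \alpha_S\Bigl(\Bigl\lfloor \tfrac{n}{2a}\Bigr\rfloor\Bigr) ,
\]
into which I substitute $\alpha_S(m) \le C m^{-\beta}$.

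The crux is choosing $a = a_n$ so that both terms are summable in $n$, after which the Borel--Cantelli lemma gives $|\hat\xi_{n,q} - \xi_q| \le \epsilon_n$ for all large $n$ almost surely. To make the exponential term $\le n^{-(1+\eta)}$ one needs $t^2 a \gtrsim \log n$, i.e. $a$ of order $n^{1-2\delta}\log n / \log\log n$ (this is precisely where the $(\log\log n)^{1/2}$ rate, rather than $(\log n)^{1/2}$, is paid for). Substituting this $a$ into the mixing term, whose $n$-exponent works out to $\tfrac54 - \delta(\tfrac52 + 2\beta)$ up to logarithmic factors, summability requires $\tfrac54 - \delta(\tfrac52 + 2\beta) < -1$, which is exactly $\delta > 9/(10+8\beta)$; the upper restriction $\delta < 1/2$ keeps $a_n = o(n)$ so that $a_n \le n/2$ eventually. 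Because this constraint is free of $\theta$, the conclusion holds for every $\theta > 0$ once $n_0$ is taken large enough.

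I expect the delicate optimization in the last paragraph to be the main obstacle: one must track the polynomial and logarithmic factors in the mixing term precisely enough to extract the sharp threshold $9/(10+8\beta)$, while simultaneously keeping the block count admissible ($1 \le a_n \le n/2$) and the exponential term summable. Everything else---the order-statistic inclusions, the first-order expansion of $F$, and the reduction of the indicator process to a bounded mean-zero $\alpha$-mixing sequence---is routine.
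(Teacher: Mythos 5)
Your argument is correct and follows essentially the same route as the paper: the first-order expansion lower-bounding $F(\xi_q\pm\epsilon_n)-q$ by $\tfrac12 f(\xi_q)\epsilon_n$ is exactly the Taylor-expansion step the paper gives explicitly, and the remainder you carry out (order-statistic inclusions, Bosq's exponential inequality for bounded $\alpha$-mixing sums, the block-count optimization yielding the threshold $\delta>9/(10+8\beta)$, and Borel--Cantelli) is precisely the part the paper omits by deferring to Lemma~3.3 of \citet{wang:hu:yang:2011}. Your exponent bookkeeping reproduces the stated threshold, so the proposal is a complete, self-contained version of the paper's proof rather than a different one.
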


\begin{proof}
  Let $\epsilon_{n} = \theta(\log \log n )^{1/2} / f_{V}(\xi_{p})
  n^{1/2 - \delta}$.  Set $\delta_{n1} = F(\xi_{q} + \epsilon_{n}) -
  F(\xi_{q})$ and note that by Taylor's expansion there exists $0 < h
  < 1$ such that
\begin{align*}
  \delta_{n1} & = \epsilon_{n} f(\xi_{q}) \frac{f(h
    \epsilon_{n} + \xi_{q})}{f(\xi_{q})} \; .
\end{align*}
Also, note that 
\[
\frac{f(h \epsilon_{n} + \xi_{q})}{f(\xi_{q})} \to 1 \hspace*{5mm} n \to \infty 
\]
and hence for sufficiently large $n$
\[
\frac{f(h \epsilon_{n} + \xi_{q})}{f(\xi_{q})} \ge \frac{1}{2} \; .
\]
Then for sufficiently large $n$
\[
\delta_{n1} \ge \frac{1}{2} \epsilon_{n} f(\xi_{q})  = \frac{\theta}{2} \frac{(\log \log n )^{1/2}}{n^{1/2 - \delta}} \; .
\]
A similar argument shows that for sufficiently large $n$
\[
\delta_{n2} = F (\xi_{q}) - F(\xi_{q} - \epsilon_{n}) \ge \frac{\theta}{2} \frac{(\log \log n )^{1/2}}{n^{1/2 - \delta}} \; .
\]
The remainder exactly follows the proof of Lemma 3.3 in
\citet{wang:hu:yang:2011} and hence is omitted.
\end{proof}

The proof of Proposition~\ref{prop:wangetal} will follow directly from the following Corollary.

\begin{corollary} \label{cor:wangetal}
  Suppose the stationary Markov chain $X$ is polynomially ergodic of
  order $m > 1$.  For any $\theta>0$ and $\delta \in (9/(10+8m), \,
  1/2)$ with probability 1 for sufficiently large $n$
\[
|\hat{\xi}_{n, q} - \xi_{q} | \le \frac{\theta(\log \log n )^{1/2}}{f_{V}(\xi_{q}) n^{1/2 - \delta}}
\]
and hence there is a positive constant $C_{0}$ such that $\hat{\xi}_{n, q} \in [\,\xi_{q} - C_{0}n^{-1/2 + \delta} \sqrt{\log n}, \, \xi_{q} + C_{0}n^{-1/2 + \delta} \sqrt{\log n}\,]$ with probability 1 for sufficiently large $n$.
\end{corollary}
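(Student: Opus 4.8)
The plan is to derive the corollary as a direct specialization of Lemma~\ref{lem:wangetal} to the transformed process $Y = \{g(X_i)\}$. Since $X$ is assumed stationary, $Y$ is a strictly stationary process whose common marginal distribution function is $F_V$, which by the standing assumptions is absolutely continuous with continuous density $f_V$ satisfying $0 < f_V(\xi_q) < \infty$. Thus the regularity hypotheses of Lemma~\ref{lem:wangetal} on the marginal are already in hand, and the only substantive point is to verify that $Y$ is $\alpha$-mixing at a polynomial rate $\alpha_Y(n) \le C n^{-m}$ with $m > 1$.

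First I would establish the mixing rate for $X$. Polynomial ergodicity of order $m$ means that \eqref{eq:tvn} holds with $\psi(n) = n^{-m}$, and by construction $E_{\pi} M < \infty$. Theorem~\ref{thm:basic mixing} then gives $\beta(n) \le n^{-m} E_{\pi} M$ for the chain $X$, and the elementary inequality $2\alpha(n) \le \beta(n)$ recorded in Section~\ref{sec:mixing} yields $\alpha_{X}(n) \le \tfrac{1}{2} n^{-m} E_{\pi} M$. Next I would transfer this bound to $Y$: because $Y_i = g(X_i)$ is a measurable function of $X_i$, the sigma-algebras satisfy $\sigma(Y_k, \ldots, Y_l) \subseteq \sigma(X_k, \ldots, X_l)$, so $\alpha_{Y}(n) \le \alpha_{X}(n)$ for all $n$, exactly as noted in the preliminaries. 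Setting $C = \tfrac{1}{2} E_{\pi} M$ and $\beta = m$, we obtain $\alpha_{Y}(n) \le C n^{-m}$ with $m > 1$.

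With these hypotheses verified, I would apply Lemma~\ref{lem:wangetal} to $Y$ with $\beta = m$; the admissible range $\delta \in (9/(10+8\beta),\, 1/2)$ becomes exactly $(9/(10+8m),\, 1/2)$, matching the statement. This produces, for every $\theta > 0$, an $n_{0}$ such that with probability 1 for all $n \ge n_{0}$,
\[
|\hat{\xi}_{n, q} - \xi_{q}| \le \frac{\theta (\log \log n)^{1/2}}{f_{V}(\xi_{q})\, n^{1/2 - \delta}} \; ,
\]
which is the first claim. For the second claim I would use $\log \log n \le \log n$ for all sufficiently large $n$, so that $(\log \log n)^{1/2} \le \sqrt{\log n}$; absorbing $\theta / f_{V}(\xi_{q})$ into a constant $C_{0}$ then gives the stated containment $\hat{\xi}_{n, q} \in [\,\xi_{q} - C_{0} n^{-1/2 + \delta}\sqrt{\log n}, \; \xi_{q} + C_{0} n^{-1/2 + \delta}\sqrt{\log n}\,]$ with probability 1 for sufficiently large $n$.

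Because Lemma~\ref{lem:wangetal} carries all the analytic difficulty, the corollary itself requires no delicate estimation; the only point demanding care is the bookkeeping of the mixing transfer from $X$ to $Y$ and confirming that the polynomial ergodicity order $m$ plays the role of the mixing exponent $\beta$, so that the constraint on $\delta$ is inherited verbatim. I expect this translation step---rather than any probabilistic estimate---to be the one place where an error could slip in.
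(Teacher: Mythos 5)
Your proof is correct and follows essentially the same route as the paper: invoke Theorem~\ref{thm:basic mixing} to bound the mixing coefficients of $Y=\{g(X_n)\}$ by a polynomial rate of order $m$, apply Lemma~\ref{lem:wangetal} with $\beta=m$, and absorb constants using $(\log\log n)^{1/2}\le(\log n)^{1/2}$ for the interval containment. You simply spell out the $\beta$-to-$\alpha$ and $X$-to-$Y$ transfer steps that the paper leaves implicit.
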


\begin{proof}
Let $\alpha_{Y}(n)$ be the strong mixing coefficients for $Y=\{g(X_{n})\}$ and note that $\alpha_{Y}(n) \le n^{-m} E_{\pi}M$ by Theorem~\ref{thm:basic mixing}.  The remainder follows from Lemma~\ref{lem:wangetal} and our basic assumptions on $F_{V}$ and $f_{V}$.
\end{proof}

\subsection{Proof of Theorems~\ref{thm:complete bosq} and~\ref{thm:complete}}% and Corollary~\ref{cor:complete}} 
\label{app:proof complete} 
We begin with some preliminary results.

\begin{lemma} \label{lem:hoef bosq} Let $X$ be stationary with
  $\beta$--mixing coefficients $\beta(n)$.  Suppose $h : \sX \to
  \mathbb{R}$ and set $W=\{ h(X_{n}) \}$.  If $|| h || :=
  \sup_{x \in \sX} | h(x) | < \infty$, then for any integer $a \in
  \left[ 1, n/2 \right]$ and each $\epsilon > 0$,
\[
\Pr \left( \left| \sum_{i=0}^{n-1} (W_{i} -  E_{\pi} 
    W_{i})  \right| > n \epsilon \right) \le 4 \exp \left\{ -
  \frac{a \epsilon^2}{8 || h ||^2} \right\} + 11 a \left( 1 +
  \frac{4|| h ||}{\epsilon} \right) ^{1/2}  \beta\left( \left\lfloor
    \frac{n}{2a} \right\rfloor \right) \; .
\]
\end{lemma}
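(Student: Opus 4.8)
The plan is to establish this concentration inequality for bounded functionals of a $\beta$-mixing stationary process by combining a blocking argument with a coupling that replaces dependent blocks by independent ones, and then applying a classical (independent-case) exponential inequality to the resulting independent block sums. This is the standard route for deviation bounds under $\beta$-mixing, and the specific constants ($4$, $8$, $11$) together with the $(1 + 4\|h\|/\epsilon)^{1/2}$ factor strongly suggest the argument descends from Bosq's exponential inequality for mixing sequences, so I would frame the proof as an application (or mild adaptation) of that result rather than rederiving the coupling from scratch.

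First I would set up the blocking. Given the integer $a \in [1, n/2]$, partition the $n$ summands $W_0, \ldots, W_{n-1}$ into blocks and write the centered sum $\sum_{i=0}^{n-1}(W_i - E_\pi W_i)$ as a sum over blocks of (approximately) equal length, using block size of order $\lfloor n/(2a)\rfloor$ so that the mixing coefficient evaluated at that lag, $\beta(\lfloor n/(2a)\rfloor)$, controls the dependence between non-adjacent blocks. The key structural input is the coupling/approximation lemma for $\beta$-mixing sequences: one can construct independent random variables with the same marginal block distributions such that the total-variation cost of the replacement is bounded by $\beta(\lfloor n/(2a)\rfloor)$ per gap, accounting for the $a\,\beta(\lfloor n/(2a)\rfloor)$ structure of the second term. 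The factor $(1 + 4\|h\|/\epsilon)^{1/2}$ and the constant $11$ are exactly the overhead one picks up from this coupling step in Bosq's formulation.

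Next I would apply an exponential (Bernstein- or Hoeffding-type) inequality to the independentized block sums. Since $\|h\| < \infty$, each block sum is bounded, so the independent-case exponential bound yields the $4\exp\{-a\epsilon^2/(8\|h\|^2)\}$ term, where the appearance of $a$ in the exponent and $\|h\|^2$ in the denominator reflects that we have $a$ effective independent blocks each contributing boundedness $\|h\|$. I would then collect the deviation probability for the independentized sum and the coupling error and add them, matching them to the two displayed terms.

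\textbf{The main obstacle} I anticipate is bookkeeping the constants and the block geometry so that the \emph{integer} block sizes and the replacement of $n/(2a)$ by $\lfloor n/(2a)\rfloor$ produce exactly the stated coefficients without slack; in particular, reconciling Bosq's inequality (which is typically stated with parameters $q$ blocks of a given length and a free choice to be optimized) against the clean form here requires care that the $a$ plays the role of the number of block-pairs and that no extra multiplicative constants creep in. Rather than re-prove the coupling, I would cite the relevant $\beta$-mixing exponential inequality and verify that substituting $\|h\|$ for the a.s. bound and $\lfloor n/(2a)\rfloor$ for the block length reproduces the claimed expression term by term, so the remainder of the argument is a direct specialization and can be omitted.
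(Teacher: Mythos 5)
Your proposal is correct and follows essentially the same route as the paper, whose entire proof is to invoke Theorem 1.3 of Bosq (1998) together with the mixing-coefficient facts collected in Appendix A. The only detail worth noting is that Bosq's inequality is stated for $\alpha$-mixing with constant $22$, and the paper obtains the constant $11$ and the $\beta\left(\left\lfloor n/(2a)\right\rfloor\right)$ factor via the elementary inequality $2\alpha(n)\le\beta(n)$, rather than through the Berbee-type $\beta$-mixing coupling you sketch.
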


\begin{proof}
  This follows easily by combining observations in
  Appendix~\ref{sec:mixing} with Theorem 1.3 from \citet{bosq:1998}.
\end{proof}

\begin{lemma} \citep[Theorem 2,][]{glyn:ormo:2002} \label{lem:hoef}
  Suppose \ \eqref{eq:unif} holds, and $h : \sX \to \mathbb{R}$ with
  $|| h || := \sup_{x \in \sX} | h(x) | < \infty$.  Set $W=\{ h(X_{n})
  \}$ and let $\epsilon > 0$, then for $n > 2 ||h|| n_{0} / (\lambda
  \epsilon)$
\[
\Pr \left( \sum_{i=0}^{n-1} W_{i} - E \left(\sum_{i=0}^{n-1} W_{i}
  \right) \ge n \epsilon \right) \le \exp \left\{ - \frac{\lambda^2 (
    n\epsilon - 2 ||h|| n_{0} / \lambda)^2}{2 n ||h||^2 n_{0}^2}
\right\} \; .
\]
\end{lemma}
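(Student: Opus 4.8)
This is a Hoeffding-type tail bound for a bounded additive functional of a uniformly ergodic chain, so the natural route is an exponential Markov inequality followed by control of the moment generating function of the centered sum. Write $S_{n} = \sum_{i=0}^{n-1} W_{i}$ and $\bar{S}_{n} = S_{n} - E S_{n}$. For any $\theta > 0$,
\[
\Pr\left( \bar{S}_{n} \ge n \epsilon \right) \le e^{-\theta n \epsilon}\, E\left[ e^{\theta \bar{S}_{n}} \right] .
\]
Everything reduces to the moment generating function estimate
\[
E\left[ e^{\theta \bar{S}_{n}} \right] \le \exp\left\{ \frac{n \|h\|^{2} n_{0}^{2} \theta^{2}}{2 \lambda^{2}} + \frac{2 \|h\| n_{0} \theta}{\lambda} \right\} ,
\]
since inserting this bound makes the exponent equal to $-\theta (n\epsilon - 2\|h\| n_{0}/\lambda) + n\|h\|^{2} n_{0}^{2} \theta^{2}/(2\lambda^{2})$, a quadratic in $\theta$ with positive leading coefficient. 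Minimizing it --- legitimate precisely when the linear coefficient $n\epsilon - 2\|h\| n_{0}/\lambda$ is positive, i.e.\ when $n > 2 \|h\| n_{0}/(\lambda \epsilon)$ --- reproduces the stated exponent by completing the square. This optimization is mechanical, so all of the content sits in the moment generating function bound.

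To obtain that bound I would exploit the regeneration structure encoded in \eqref{eq:unif}. Using the split-chain decomposition \eqref{eq:split} with constant small function $s \equiv \lambda$ and small measure $\phi$, one may write $P^{n_{0}}(x, \cdot) = \lambda \phi(\cdot) + (1-\lambda) {\mathsf R}(x, \cdot)$, so that every $n_{0}$ steps the chain redraws its state from $\phi$, independently of the past, with probability at least $\lambda$. The Laplace transform can then be propagated by a one-step conditioning recursion: with $(T_{\theta} g)(x) = \int P(x, dy)\, e^{\theta \bar{h}(y)} g(y)$ one has $E_{x}[ e^{\theta \sum_{i=1}^{n} \bar{h}(X_{i})} ] = (T_{\theta}^{n} \mathbf{1})(x)$, and the minorization lets one replace the worst-case conditional mean at each step by the $\phi$-average at a multiplicative cost governed by $\lambda$. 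Applying \emph{Hoeffding's lemma} to the bounded increment $\bar{h}(X_{i})$, whose range is controlled by $\|h\|$, supplies a sub-Gaussian factor whose variance proxy per unit time is of order $n_{0}^{2}\|h\|^{2}/\lambda^{2}$, the inflation by $1/\lambda^{2}$ reflecting the resetting probability $\lambda$; tracking the $n_{0}$-step nature of the minorization through the recursion yields the $n_{0}$ and $n_{0}^{2}$ factors, while the discrepancy between centering at $E S_{n}$ and at the stationary mean $n \pi(h)$ --- present because the chain need not start from $\phi$ --- contributes the additive linear term $2\|h\| n_{0}\theta/\lambda$.

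The main obstacle is exactly this moment generating function estimate: converting the qualitative minorization into a quantitative, dimension-free control of the Laplace transform of a genuinely dependent sum. One cannot simply pretend the increments are independent; the argument must instead show that each block ``resets'' with probability at least $\lambda$ and that the accumulated fluctuation over the non-resetting excursions remains sub-Gaussian, with its variance proxy inflated only by the explicit factor $n_{0}^{2}/\lambda^{2}$. Keeping the constants sharp enough to land on the clean closed form above --- and correctly accounting for the non-stationary initialization, which is what forces the threshold $n > 2\|h\| n_{0}/(\lambda \epsilon)$ --- is the delicate part; the remaining steps are routine.
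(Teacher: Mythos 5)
You should first note that the paper does not prove this lemma at all: it is imported verbatim, constants included, as Theorem 2 of Glynn and Ormoneit (2002), so there is no in-paper argument to compare against. Judged on its own terms, your reduction is sound as far as it goes: the exponential Markov inequality, the observation that the claimed tail bound is equivalent to the moment generating function estimate you display, and the remark that optimizing the resulting quadratic in $\theta$ is legitimate exactly when $n\epsilon - 2\|h\|n_{0}/\lambda > 0$ (whence the stated threshold on $n$) are all correct --- and are indeed the routine part.

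The genuine gap is that the entire content of the lemma, namely the moment generating function bound, is described rather than proved. ``The minorization lets one replace the worst-case conditional mean at each step by the $\phi$-average at a multiplicative cost governed by $\lambda$'' is not an argument, and it is far from clear that a direct transfer-operator recursion of this kind lands on the specific constants $n_{0}^{2}\|h\|^{2}/\lambda^{2}$ and $2\|h\|n_{0}/\lambda$ rather than something lossier. The argument that actually works (and is the one in Glynn and Ormoneit) runs through the Poisson equation: set $\hat{h} = \sum_{k\ge 0}(P^{k}h - \pi h)$, which by \eqref{eq:unif tv bound} satisfies $\|\hat{h}\|_{\infty} \le \|h\|\sum_{k\ge 0}(1-\lambda)^{\lfloor k/n_{0}\rfloor} = \|h\|\,n_{0}/\lambda$; then $\sum_{i=0}^{n-1}\bigl(h(X_{i}) - \pi h\bigr) = M_{n} + \hat{h}(X_{0}) - \hat{h}(X_{n})$ with $M_{n}$ a martingale whose increments are bounded in terms of $\|\hat{h}\|_{\infty}$, so Azuma--Hoeffding supplies the sub-Gaussian factor and the telescoped boundary terms supply the additive $2\|h\|n_{0}/\lambda$. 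Note also that attributing the linear term to the discrepancy between $E S_{n}$ and $n\pi(h)$ cannot be the whole story: the lemma already centers at the exact expectation $E S_{n}$, whereas the boundary terms $\hat{h}(X_{0}) - \hat{h}(X_{n})$ are present even under stationarity. Without carrying out one of these arguments, the proposal establishes nothing beyond the (easy) equivalence of the tail bound with an unproved MGF inequality.
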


\begin{lemma} \label{lem:bounds} Suppose $X_{0} \sim \pi$ and let $g :
  \sX \to \mathbb{R}$ be Borel, $Y = \{ g(X_{n}) \}$ and $\epsilon >
  0$ If $W_{n} = I(Y_{n} > \xi_{q} + \epsilon)$ and $\delta_1 = F_{V}
  ( \xi_{q} + \epsilon ) -q $, then
\begin{equation}
\label{eq:idk1}
\Pr \left( \hat{\xi}_{n,q} > \xi_{q} + \epsilon \right) \le \Pr \left( \left| \sum_{i = 0}^{n-1} (W_{i} - E_{\pi} W_{i}) \right| > n \delta_1 \right)
\end{equation}
while if $V_n = I( Y_n \le \xi_{q} - \epsilon )$ and $\delta_2 = q -
F_{V} ( \xi_{q} - \epsilon )$, then for $0 < \delta < 1$
\begin{equation}
\label{eq:idk2}
\Pr \left( \hat{\xi}_{n,q} < \xi_{q} - \epsilon \right) \le \Pr \left( \left| \sum_{i = 0}^{n-1} (V_i - E_{\pi} V_i)
    \right| > n \delta_2 \delta\right) \; .
\end{equation}
\end{lemma}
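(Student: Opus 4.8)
The plan is to translate each order-statistic event into an event about a partial sum of bounded indicators and then invoke the defining inequality $j-1 < nq \le j$ from \eqref{eq:estimator}. Since each $Y_{i} \sim F_{V}$ under stationarity, the centered sums $\sum_{i=0}^{n-1}(W_{i} - E_{\pi}W_{i})$ and $\sum_{i=0}^{n-1}(V_{i} - E_{\pi}V_{i})$ are exactly the objects the concentration inequalities (Lemma~\ref{lem:hoef bosq} and Lemma~\ref{lem:hoef}) later control, so the whole task is to show that the two quantile deviation events are contained in the corresponding large-deviation events.

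First I would handle the upper tail \eqref{eq:idk1}. Because $\hat{\xi}_{n,q} = Y_{n(j)}$, the event $\{\hat{\xi}_{n,q} > \xi_{q}+\epsilon\}$ occurs exactly when at most $j-1$ of the observations lie at or below $\xi_{q}+\epsilon$, equivalently when at least $n-j+1$ of them exceed $\xi_{q}+\epsilon$; that is, $\{\hat{\xi}_{n,q} > \xi_{q}+\epsilon\} = \{\sum_{i=0}^{n-1} W_{i} \ge n-j+1\}$. Evaluating $E_{\pi}W_{i} = 1 - F_{V}(\xi_{q}+\epsilon)$ and centering, on this event $\sum_{i=0}^{n-1}(W_{i} - E_{\pi}W_{i}) \ge (n-j+1) - n(1 - F_{V}(\xi_{q}+\epsilon))$. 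The defining inequality $j-1 < nq$ gives $n-j+1 > n(1-q)$, so the right-hand side strictly exceeds $n(F_{V}(\xi_{q}+\epsilon) - q) = n\delta_{1}$, placing the event inside $\{|\sum_{i=0}^{n-1}(W_{i} - E_{\pi}W_{i})| > n\delta_{1}\}$ and proving \eqref{eq:idk1}.

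For the lower tail \eqref{eq:idk2} I would argue symmetrically. The event $\{\hat{\xi}_{n,q} < \xi_{q}-\epsilon\}$ forces at least $j$ observations to lie strictly below $\xi_{q}-\epsilon$; since strict inequality implies the weak one, this event is contained in $\{\sum_{i=0}^{n-1} V_{i} \ge j\}$. With $E_{\pi}V_{i} = F_{V}(\xi_{q}-\epsilon)$, centering yields, on this event, $\sum_{i=0}^{n-1}(V_{i} - E_{\pi}V_{i}) \ge j - nF_{V}(\xi_{q}-\epsilon)$, and the defining inequality $nq \le j$ makes the right-hand side at least $n(q - F_{V}(\xi_{q}-\epsilon)) = n\delta_{2}$.

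Here lies the only genuine subtlety, and the reason the factor $\delta$ enters asymmetrically. On the upper side the \emph{strict} inequality $j-1 < nq$ delivered a strict bound ``$> n\delta_{1}$'' for free, exactly the form the concentration lemmas consume. On the lower side the definition supplies only $nq \le j$, hence the \emph{non-strict} bound ``$\ge n\delta_{2}$'', which does not directly match a strict tail event. The remedy is to spend the slack $0 < \delta < 1$: since $\delta_{2} > 0$ we have $n\delta_{2} > n\delta_{2}\delta$, so ``$\ge n\delta_{2}$'' implies ``$> n\delta_{2}\delta$'', giving $\{\hat{\xi}_{n,q} < \xi_{q}-\epsilon\} \subseteq \{|\sum_{i=0}^{n-1}(V_{i} - E_{\pi}V_{i})| > n\delta_{2}\delta\}$, which is \eqref{eq:idk2}. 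I expect this strict-versus-weak bookkeeping in the order-statistic-to-count translation to be the main obstacle; the rest is routine, requiring only the correct directions of the containments and the elementary evaluation of $E_{\pi}W_{i}$ and $E_{\pi}V_{i}$.
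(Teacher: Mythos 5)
Your proof is correct and follows essentially the same route as the paper's: both translate the quantile deviation events into count events for the indicators $W_i$ and $V_i$ (the paper phrases this through the empirical distribution function $F_n$ and the fact that $F_n(\hat{\xi}_{n,q}) \ge q$, you through order-statistic counting, which is the same bookkeeping), center using $j-1 < nq \le j$, and spend the factor $\delta$ to convert the non-strict lower-tail bound $\ge n\delta_2$ into the strict bound $> n\delta_2\delta$. Your version makes the strict-versus-weak inequality accounting slightly more explicit than the paper's, but there is no substantive difference.
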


\begin{proof}
We compute
\begin{align}
\Pr \left( \hat{\xi}_{n,q} > \xi_{q} + \epsilon \right) &= \Pr \left( F_n ( \hat{\xi}_{n,q} ) > F_n ( \xi_{q} + \epsilon ) \right) \notag \\
&= \Pr \left( q > F_n ( \xi_{q} + \epsilon ) \right) \notag \\
& = \Pr \left( \sum_{i = 0}^{n-1} I( Y_i > \xi_{q} + \epsilon ) > n (1-q) \right) \notag \\
& = \Pr \left( \sum_{i = 0}^{n-1} (W_i - E_{\pi} W_i) > n \delta_1 \right) \notag \\
& \le \Pr \left( \left| \sum_{i = 0}^{n-1} (W_{i} - E_{\pi} W_{i}) \right| > n \delta_1 \right) \notag \; .
\end{align}
Similarly, 
\begin{align}
  \Pr \left( \hat{\xi}_{n,q} < \xi_{q} - \epsilon \right) & \le \Pr \left( F_n ( \hat{\xi}_{n,q} ) \le F_n ( \xi_{q} - \epsilon ) \right) \notag \\
  & \le \Pr \left( q \le F_n ( \xi_{q} - \epsilon ) \right) \notag \\
  & = \Pr \left( \sum_{i = 0}^{n-1} I( Y_i \le \xi_{q} - \epsilon ) \ge n q \right) \notag \\
  & = \Pr \left( \sum_{i = 0}^{n-1} (V_i - E_{\pi} V_i) \ge n \delta_2 \right) \notag \\
  & \le \Pr \left( \left| \sum_{i = 0}^{n-1} (V_i - E_{\pi} V_i)
    \right| > n \delta_2 \delta \right) \notag \; .
\end{align}
\end{proof}

\begin{proof}[Proof of Theorem~\ref{thm:complete bosq}]
Let $\epsilon > 0$.  Then
\[
\Pr \left( \left| \hat{\xi}_{n,q} - \xi_{q} \right| > \epsilon \right) = \Pr \left( \hat{\xi}_{n,q} > \xi_{q} + \epsilon \right) + \Pr \left( \hat{\xi}_{n,q} < \xi_{q} - \epsilon \right) \; .
\]
From Lemmas~\ref{lem:hoef bosq} and~\ref{lem:bounds}, we have for any
integer $a \in \left[ 1, n/2 \right]$,
\[
\Pr \left( \hat{\xi}_{n,q} > \xi_{q} + \epsilon \right) \le 4 \exp
\left\{ - \frac{a \delta_1^2}{8}  \right\} + 11a \left( 1 +
  \frac{4}{\delta_1} \right) ^{1/2} \beta\left(  \left\lfloor \frac{n}{2a} \right\rfloor \right)
\]
and 
\[
\Pr \left( \hat{\xi}_{n,q} < \xi_{q} - \epsilon \right) \le 4 \exp
\left\{ - \frac{a (\delta_2 \delta)^2}{8}  \right\} + 11a \left( 1 +
  \frac{4}{\delta_2 \delta} \right) ^{1/2}  \beta\left(  \left\lfloor \frac{n}{2a} \right\rfloor \right)\; .
\]
Suppose $\gamma= \min \{ \delta_1 , \delta_2 \delta\}$, then
\[
\Pr \left( \left| \hat{\xi}_{n,q} - \xi_{q} \right| > \epsilon \right) \le 8 \exp \left\{ - \frac{ a \gamma^2}{8}  \right\} + 22 a \left( 1 + \frac{4}{\gamma} \right) ^{1/2}  \beta\left(  \left\lfloor \frac{n}{2a} \right\rfloor \right) \; .
\]
Finally note that by Theorem~\ref{thm:basic mixing}
\[
\beta\left(  \left\lfloor \frac{n}{2a} \right\rfloor \right) \le
\psi\left( \left\lfloor \frac{n}{2a} \right\rfloor\right) E_{\pi} M \; .
\]
\end{proof}

\begin{proof}[Proof of Corollary~\ref{cor:unif bounds}]
As in the proof of Theorem~\ref{thm:complete bosq} we have
\[
\Pr \left( \left| \hat{\xi}_{n,q} - \xi_{q} \right| > \epsilon \right) \le 8 \exp \left\{ - \frac{ a \gamma^2}{8}  \right\} + 22 a \left( 1 + \frac{4}{\gamma} \right) ^{1/2}  \beta\left(  \left\lfloor \frac{n}{2a} \right\rfloor \right) \; .
\]
That 
\[
\beta\left(  \left\lfloor \frac{n}{2a} \right\rfloor \right)  \le (1-\lambda)^{\left \lfloor \frac{n}{2an_{0}} \right \rfloor}
\]
follows from \eqref{eq:beta mixing} and that $\|P^{n}(x,\cdot) -
\pi(\cdot)\| \le (1-\lambda)^{\lfloor n/n_{0} \rfloor}$ for all $n$.
\end{proof}

\begin{proof}[Proof of Theorem~\ref{thm:complete}]
First note that
\[
\Pr \left( \left| \hat{\xi}_{n,q} - \xi_{q} \right| > \epsilon \right) = \Pr \left( \hat{\xi}_{n,q} > \xi_{q} + \epsilon \right) + \Pr \left( \hat{\xi}_{n,q} < \xi_{q} - \epsilon \right) \; .
\]
From Lemmas~\ref{lem:hoef} and~\ref{lem:bounds} we have for $n > 2 n_{0} / (\lambda \delta_1)$
\[
\Pr \left( \hat{\xi}_{n,q} > \xi_{q} + \epsilon \right) \le \exp \left\{ - \frac{\lambda^2 ( n\delta_1 - 2 n_{0} / \lambda)^2}{2 n n_{0}^2}  \right\}
\]
and for $n > 2 n_{0} / (\lambda \delta \delta_2)$
\[
\Pr \left( \hat{\xi}_{n,q} < \xi_{q} - \epsilon \right) \le \exp \left\{ - \frac{\lambda^2 ( n \delta \delta_2 - 2 n_{0} / \lambda)^2}{2 n n_{0}^2}  \right\} \; ,
\]
Suppose $\gamma = \min \{ \delta_1 , \delta \delta_2 \}$, then for $n > 2 n_{0} / (\lambda \gamma)$
\[
\Pr \left( \left| \hat{\xi}_{n,q} - \xi_{q} \right| > \epsilon \right) \le 2 \exp \left\{ - \frac{\lambda^2 ( n\gamma - 2 n_{0} / \lambda)^2}{2 n n_{0}^2}  \right\} \; .
\]
\end{proof}

\subsection{Proof for Example~\ref{ex:linchpin}}  \label{app:linchpin}
Let $q(x)$ denote the density of a $t(3)$ distribution, $f_{X}(x)$
the density of a $t(4)$ distribution, $f_{Y|X}(y|x)$ the density of a
$\text{Gamma}(5/2, \, 2 + x^{2}/2)$ distribution and $\pi(x,y)$ the
density at \eqref{eq:da}. Then the Markov chain has Markov transition
density given by
\[
k(x' , y' | x, y) = f_{Y|X}(y' | x') k(x' | x)
\]
where
\[
k(x' | x) \ge q(x') \left\{ 1 \wedge \frac{f_{X}(x') q(x)}{f_{X}(x) q(x')} \right\} = f_{X}(x') \left\{ \frac{q(x)}{f_{X}(x)} \wedge \frac{q(x')}{f_{X}(x')} \right\} \; .
\]
Since for all $x$
\[
\frac{q(x)}{f_{X}(x)} \ge \frac{\sqrt{9375}}{32 \pi}
\]
we have that for all $x,y$
\[
k(x' , y' | x, y) \ge \frac{\sqrt{9375}}{32 \pi} f_{Y|X}(y' | x')f_{X}(x') = \frac{\sqrt{9375}}{32 \pi}\pi(x', y') 
\]
and our claim follows immediately.

\subsection{Proof of Theorem~\ref{thm:clt}} \label{app:clt proof}

We need some notation and few definitions before we begin; for more
background on what follows the reader should consult
\citet{vaar:well:1996}.  A class $\mathcal{T}$ of a set $S$ is said to
pick out a subset $C$ of the set $\{x_1, \ldots, x_n\} \subset S$ if
$C = T \cap \{ x_1, \ldots, x_n \}$ for some $T \subset \mathcal{T}$.
The class $\mathcal{T}$ is said to shatter $\{x_1, \ldots, x_n\}$ if
it picks out all $2^n$ possible subsets.  $\mathcal{T}$ is a V-C class
if there is some $n < \infty$ such that no subset of size $n$ is
shattered by $\mathcal{T}$.  The subgraph of a function $f \colon S
\to \R$ is the set $\{(s,t): 0 \le t \le f(s) \mbox{ or } f(s) \le t
\le 0 \}$. A class of functions $\cF$ is a V-C subgraph class if the
class of its subgraphs is a V-C class of sets (in $S \times \R$).

 Let $\cF$ be a class of functions and define for $f \in \cF$
\begin{equation*}
  \G_n(f) :=
  \inv{\sqrt{n}}
  \sum_{i=0}^{n-1} (f(X_i)-E(f(X_i))) \; .
\end{equation*}
  If, considered as
a process indexed by $\cF$, $\G_n$ converges to a Gaussian limit
process in the space
\[
l_{\infty} (\cF) := \left\{ g : \cF \to \R \, : \, \sup_{f \in \cF}
|g(f)| < \infty \right\} 
\]
equipped with the supremum metric, then we say that $\{X_i\}$ satisfies a
functional CLT.

We begin with a preliminary result.  
\begin{lemma}
  \label{lem:markov-donsker}
  Let $\cF$ be a measurable uniformly bounded V-C subgraph class of
  functions.  If $X$ is stationary and polynomially ergodic of order
  $m > 1$, then there is a Gaussian process $\{G(f)\}_{f \in \cF}$
  which has a version with uniformly bounded and uniformly continuous
  paths with respect to the $L^{2}(\pi)$-norm such that
  \begin{equation}
    \label{eq:markov-donsker}
    \left\{ n^{-1/2} \sum_{i=1}^{n} (f(X_{i}) - E_{\pi} f)\right\}_{f \in \cF} \Longrightarrow \{G(f)\}_{f \in \cF}~~ \text{ in } ~~ l_{\infty}(\cF) \; .
  \end{equation}
Moreover,
\begin{equation} \label{eq:var}
  \var(G(f)) = E[f(X_0) - E(f(X_0))]^2 + 
  2 \sum_{i=1}^\infty E\left[ (f(X_0)-E(f(X_0))) ( f(X_i)-E(f(X_i))) \right] \; .
\end{equation}
\end{lemma}

\begin{proof}
In light of our Theorem~\ref{thm:basic mixing}, \eqref{eq:markov-donsker} follows from Corollary 2.1 in \citet{arco:yu:1994} and \eqref{eq:var} follows from Theorem 0 in \citet{brad:1985}.
\end{proof}

\begin{proof}[Proof of Theorem~\ref{thm:clt}]
Let $\cI = \{ 1_{(-\infty,t]} \}_{t \in \R}$ and set $\cF = \cI \circ
g = \{ 1_{(-\infty,t]} \circ g \}_{t \in \R}$. The class of indicator
functions $\cI = \{ 1_{(-\infty,t]} \}_{t \in \R}$ is a uniformly
  bounded V-C class (see Example~2.6.1, page 135, and Problem~9, page
  151, in \cite{vaar:well:1996}).  By Lemma~2.6.18(vii), page 147, in
  \cite{vaar:well:1996} $\cI \circ g$ is thus also a V-C class.
  Letting $\F_{n}(t) = (1/n) \sum_{i=0}^{n-1} 1_{(-\infty,t]}(Y_i)$
    and using Lemma~\ref{lem:markov-donsker} shows that the empirical
    process $\sqrt{n} (\F_{n} - F_V)$, satisfies
\begin{equation}
  \label{eq:classicaldonsker}
  \sqrt{n}(\F_{n}-F_V) \Longrightarrow \G
\end{equation}
for a Gaussian process $\G$.  Since $F_V$ is continuously
differentiable on $[a,b] = [F_V^{-1}(p) - \epsilon, F_V^{-1}(q) +
  \epsilon]$ for $0 < p < q < 1$ and some $\epsilon > 0$, with
positive derivative $f_V$, it now follows from Theorem~3.9.4 and
Lemma~3.9.23(i) in \cite{vaar:well:1996} that
\begin{equation}
  \label{eq:empirical-quantile-proc-conv}
  \sqrt{n} \left( \F_n^{-1} - F_V^{-1} \right) 
  \Longrightarrow - \frac{\G \circ F_V^{-1} }{f_V \circ F_V^{-1}}, 
  \; \; \; \; 
  \mbox{ in } l^{\infty}[p,q],
\end{equation}
Now, since the class $\cF = \cI \circ g$ is a (uniformly) bounded
class by \eqref{eq:empirical-quantile-proc-conv} we have that the
variance of $\G(y)$ (which corresponds to evaluating $G$ at $f =
1_{(-\infty,y]} \circ g$) is
\begin{equation}
  E\left( 1_{(-\infty,y]}(Y_0) - F_V(y)\right)^2
  + 2 \sum_{i=1}^\infty E\left( (1_{(-\infty,y]}(Y_0) - F_V(y)) (1_{(-\infty,y]}(Y_i) - F_V(y)) \right)
  = \sigma^2(y),
  \label{eq:var-GG}
\end{equation}
where $\sigma^2(y)$ is as defined in \eqref{eq:variance}.  To finish,
we need to evaluate the processes in
\eqref{eq:empirical-quantile-proc-conv} at $q$ so that
\eqref{eq:var-GG} gives us that the variance of $\G(\xi_q)$ is
$\sigma^2(\xi_q)$ and thus the variance of $- \G(\xi_q) / f_V(\xi_q) =
\sigma^2(\xi_q) /f_V^2(\xi_q)$ as desired.

That the same conclusion holds for any initial distribution follows from the same argument as in Theorem 17.1.6 of \citet{meyn:twee:2009}.
\end{proof}

\subsection{Proof of Theorem~\ref{thm:moments}} \label{app:moments}

There exists $\epsilon > 0$ such that $m > 1 + \epsilon + 2/\delta$.
Using~\eqref{eq:tv to mixing} we have that
\[
\sum_{n=1}^{\infty} n^{\epsilon + 2/\delta} \alpha(n) < \infty \; .
\]
\pcite{samu:2004} Proposition 3.1 implies that $E_Q N_1^{2 + \epsilon + 2 / \delta} < \infty$, and \pcite{samu:2004} 
Corollary 3.5 says there exists $2 < p_1 < 2 + \delta$ such that $E_Q (S_1)^{p_1} < \infty$.

\subsection{Proof of Theorem~\ref{thm:clt regen}} \label{app:RS CLT}

We require a preliminary result before proceeding with the
rest of the proof.

\begin{lemma} \label{lem:cont} If $X$ is polynomially ergodic of order
  $m > 1$, then $\Gamma(y)$ is continuous at $\xi_{q}$.
\end{lemma}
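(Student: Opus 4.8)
The plan is to reduce the continuity of $\Gamma$ to a dominated-convergence argument for its numerator. Since $N_1 = \tau_1 - \tau_0 \ge 1$, the quantity $E_Q(N_1)$ satisfies $1 \le E_Q(N_1) < \infty$ (finiteness coming from $E_Q N_1^2 < \infty$ below) and does not depend on $y$, so it suffices to prove that
\[
G(y) := E_Q\left[\left(S_1(y) - F_V(y) N_1\right)^2\right]
\]
is continuous at $y = \xi_q$. By the sequential characterization of continuity, I would fix an arbitrary sequence $y_k \to \xi_q$ and show $G(y_k) \to G(\xi_q)$, dividing by the constant $[E_Q(N_1)]^2$ at the end.

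First I would establish $Q$-almost sure pointwise convergence of the integrand. For a fixed tour, $N_1$ is finite $Q$-a.s., so $y \mapsto S_1(y) = \sum_{i=\tau_0}^{\tau_1 - 1} I(Y_i \le y)$ is a finite sum of indicators, hence a step function whose only discontinuities occur at the finitely many values $Y_{\tau_0}, \ldots, Y_{\tau_1 - 1}$. Because $F_V$ is absolutely continuous, $\Pr(Y_i = \xi_q) = 0$ for each $i$, so $Q$-a.s.\ none of the $Y_i$ in the tour equals $\xi_q$; consequently $S_1$ is continuous at $\xi_q$ and $S_1(y_k) \to S_1(\xi_q)$ $Q$-a.s. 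Combined with the continuity of $F_V$, this gives
\[
\left(S_1(y_k) - F_V(y_k) N_1\right)^2 \longrightarrow \left(S_1(\xi_q) - F_V(\xi_q) N_1\right)^2 \quad Q\text{-a.s.}
\]

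Next I would supply an integrable dominating function. Since $0 \le S_1(y) \le N_1$ and $0 \le F_V(y) \le 1$ for every $y$, we have $|S_1(y) - F_V(y) N_1| \le 2 N_1$, so the integrand is bounded above by $4 N_1^2$ uniformly in $y$. Applying Theorem~\ref{thm:moments} with the bounded function $h(x) = I(g(x) \le \xi_q)$, for which $E_\pi |h|^{2+\delta} < \infty$ holds trivially for every $\delta$ and in particular for some $\delta > 2/(m-1)$, yields $E_Q N_1^2 < \infty$, so $4 N_1^2$ is $Q$-integrable. The dominated convergence theorem then delivers $G(y_k) \to G(\xi_q)$, and continuity of $\Gamma$ at $\xi_q$ follows.

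The only genuinely delicate point is the almost sure convergence $S_1(y_k) \to S_1(\xi_q)$: it relies precisely on the absence of an atom of $F_V$ at $\xi_q$, guaranteed by the standing assumption that $F_V$ is absolutely continuous. Everything else is routine once the second-moment bound $E_Q N_1^2 < \infty$ from Theorem~\ref{thm:moments} is in hand.
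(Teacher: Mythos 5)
Your proof is correct and follows essentially the same route as the paper's: both arguments bound the integrand by $4N_1^2$, invoke Theorem~\ref{thm:moments} (with a bounded $h$) to get $E_Q N_1^2 < \infty$, and conclude by dominated convergence, with your step-function argument for the $Q$-a.s.\ convergence of $S_1(y_k)$ being, if anything, more explicit than the paper's appeal to a ``composition limit law.'' The one small loose end is that the no-atom claim must hold under the $Q$-initialized chain rather than under $\pi$; this is easily patched, e.g.\ via the identity $E_Q\bigl[\sum_{i=0}^{\tau_1-1} I(Y_i=\xi_q)\bigr] = E_Q(N_1)\,\Pr_\pi(V=\xi_q) = 0$ or by noting $Q\ll\pi$ from the minorization, so the proof stands.
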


\begin{proof}
Denote the limit from the right and left as $\lim_{y \to x^{+}}$ and $\lim_{y \to x^{-}}$, respectively.  From the assumption on $F_{V}$ it is clear that 
\begin{equation} \label{eq:FequalsF}
\lim_{y \to \xi_{q}^{+}} F_{V} (y) = \lim_{y \to \xi_{q}^{-}} F_{V} (y) \; .
\end{equation}
Recall that
\[
S_{1}(y) = \sum_{i=0}^{\tau_{1} - 1} I(Y_{i} \le y) \; .
\]
Let $Z_{1} (y) = S_{1} (y) - F_{V}(y) N_{1}$ and note $E_{Q} \left[ Z_{1} (y) \right] =0$ since \cite{hobe:jone:pres:rose:2002} show
\begin{equation} \label{eq:hjpr}
E_{Q} S_{1} (y) =  F_{V}(y) E_{Q} N_{1} \text{ for all }y \in \R \; .
\end{equation}
Equations \eqref{eq:FequalsF} and \eqref{eq:hjpr} yield $E_{Q} \left[ \lim_{y \to \xi_{q}^{+}} S_{1} (y) \right] = E_{Q} \left[ \lim_{y \to \xi_{q}^{-}} S_{1} (y) \right]$.  The composition limit law and \eqref{eq:FequalsF} result in
%  and since $\lim_{y \to \xi_{q}^{+}} S_{1} (y) \ge \lim_{y \to \xi_{q}^{-}} S_{1} (y)$, we can conclude
% \begin{equation} \label{eq:SequalsS}
% \lim_{y \to \xi_{q}^{+}} S_{1} (y) = \lim_{y \to \xi_{q}^{-}} S_{1} (y) \text{ with probability 1.}
% \end{equation}
% Equations \eqref{eq:FequalsF} and \eqref{eq:SequalsS} imply with probability 1 that 
%$\lim_{y \to \xi_{q}^{+}} Z_{1} (y) = \lim_{y \to \xi_{q}^{-}} Z_{1} (y)$ and $\lim_{y \to \xi_{q}^{+}} Z_{1} (y)^2 
%= \lim_{y \to \xi_{q}^{-}} Z_{1} (y)^2$.
\begin{equation}
E_{Q} \left[ \lim_{y \to \xi_{q}^{+}} Z_{1} (y)^2 \right] = E_{Q} \left[ \lim_{y \to \xi_{q}^{-}} Z_{1} (y)^2 \right] . \label{eq:ZequalsZ}
\end{equation}
What remains to show is that the limit of the expectation is the expectation of the limit.  Notice that $0 < S_{1} (y) \le N_{1}$ for all $y \in \R$ and 
\[
\left| Z_{1} (y) \right| = \left| S_{1} (y) - F_{V}(y) N_{1} \right| \le S_{1} (y) + N_{1} \le 2 N_{1} ,
\]
which implies $E_{Q} \left[ Z_{1} (y) ^2 \right] \le 4 E_{Q} N_{1}^2$.
By Theorem~\ref{thm:moments} $E_{Q} N_{1}^2 < \infty$ and the
dominated convergence theorem gives, for any finite $x$,
\begin{align*}
\lim_{y \to x} E_{Q} \left[ Z_{1} (y) ^2 \right] & = E_{Q} \left[ \lim_{y \to x} Z_{1} (y)^2 \right] .
\end{align*}
Finally, from the above fact and \eqref{eq:ZequalsZ} we have $\lim_{y \to \xi_{q}^{+}} E_{Q} \left[ Z_{1} (y) ^2 \right] = \lim_{y \to \xi_{q}^{-}} E_{Q} \left[ Z_{1} (y) ^2 \right]$, and hence $E_{Q} \left[ Z_{1} (y) ^2 \right]$ is continuous at $\xi_{q}$ implying the desired result.
\end{proof}

\cite{hobe:jone:pres:rose:2002} show that $\Gamma(y) = \sigma^{2}(y)
E_{\pi} s $ where $s$ is defined at \eqref{eq:minor.gen}, which yields
the following corollary.

\begin{corollary} \label{cor:cont} 
Under the conditions of Lemma~\ref{lem:cont}, $\sigma^{2}(y)$ is continuous at $\xi_{q}$.
\end{corollary}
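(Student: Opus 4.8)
The plan is to leverage the identity displayed immediately before the corollary, namely the result of \citet{hobe:jone:pres:rose:2002} that $\Gamma(y) = \sigma^{2}(y) \, E_{\pi} s$, where $s$ is the small function from \eqref{eq:minor.gen}. Since the corollary is stated under the conditions of Lemma~\ref{lem:cont}, we already have in hand that $\Gamma(y)$ is continuous at $\xi_{q}$, so the entire task reduces to transferring continuity across this algebraic relationship.

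First I would observe that $E_{\pi} s$ is a fixed \emph{positive} constant: the regenerative construction requires a small function with $E_{\pi} s > 0$, and for a Harris ergodic chain such a minorization (possibly for an iterate of the kernel) is always available. Crucially, $\sigma^{2}(y)$ as defined at \eqref{eq:variance} is an intrinsic feature of the process and does not depend on the particular choice of $s$, so the identity $\Gamma(y) = \sigma^{2}(y)\, E_{\pi} s$ may be freely used for whatever valid small function underlies the split chain.

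Next I would simply solve for $\sigma^{2}(y)$, writing
\[
\sigma^{2}(y) = \frac{\Gamma(y)}{E_{\pi} s} \; .
\]
Because $E_{\pi} s$ is a strictly positive constant, dividing by it preserves continuity, so continuity of $\Gamma$ at $\xi_{q}$ from Lemma~\ref{lem:cont} immediately yields continuity of $\sigma^{2}$ at $\xi_{q}$. There is no genuine obstacle here: all the analytic work—the dominated convergence argument controlling $E_{Q}[Z_{1}(y)^{2}]$ and the matching of one-sided limits—was already carried out in the proof of Lemma~\ref{lem:cont}, and this corollary is essentially a one-line consequence. The only point requiring mild care is to note explicitly that $E_{\pi} s \neq 0$, which is guaranteed by the standing assumption $E_{\pi} s > 0$ accompanying \eqref{eq:minor.gen}.
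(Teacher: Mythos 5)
Your proposal is correct and is exactly the argument the paper uses: the sentence preceding the corollary invokes the identity $\Gamma(y) = \sigma^{2}(y)\, E_{\pi} s$ from \citet{hobe:jone:pres:rose:2002}, and continuity of $\sigma^{2}$ at $\xi_{q}$ follows from Lemma~\ref{lem:cont} after dividing by the positive constant $E_{\pi} s$. Nothing is missing.
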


\begin{proof}[Proof of Theorem~\ref{thm:clt regen}]
Notice
\begin{align*}
\Pr \left( \sqrt{R} \left( Y_{\tau_{R} (j)} - \xi_{q} \right) \le y \right) & = \Pr \left( Y_{\tau_{R} (j)} \le \xi_{q} + y / \sqrt{R} \right) \\
& = \Pr \left( \sum_{k=0}^{\tau_{R}-1} I \{ Y_{k} \le \xi_{q} + y / \sqrt{R} \} \ge j \right) \\
& = \Pr \left( \sum_{k=0}^{\tau_{R}-1} \left[ I \{ Y_{k} \le \xi_{q} + y / \sqrt{R} \} - F_{V} \left( \xi_{q} + y / \sqrt{R} \right) \right] \right.\\
& \quad \quad \quad \left. \ge j - \tau_{R} F_{V} \left( \xi_{q} + y / \sqrt{R} \right) \right) \\
& = \Pr \left( \frac{\sqrt{R}}{\tau_{R}} \sum_{k=0}^{\tau_{R}-1} W_{R, k} \ge s_{R} \right) \; ,
\end{align*}
where 
\[
W_{R, k} = I \{ Y_{k} \le \xi_{q} + y / \sqrt{R} \} - F_{V} \left( \xi_{q} + y / \sqrt{R} \right), \;~~~ k = 0, \dots, \tau_{R}-1,
\]
and 
\[
s_{R} = \frac{\sqrt{R}}{\tau_{R}} \left(  j - \tau_{R} F_{V} \left( \xi_{q} + y / \sqrt{R} \right) \right) \; .
\]

First, consider the $s_{R}$ sequence.  Let $h : \mathbb{R}^{+} \to
\mathbb{R}^{+}$ satisfy $\lim_{R \to \infty} h \left( \tau_{R} \right)
/ \sqrt{\tau_{R}} = 0$ and set $j=\tau_R q + h(\tau_R)$.  Note that $q
= F_V(\xi_q)$.  For $y \neq 0$
\begin{align*}
  s_R & = \frac{\sqrt{R}}{\tau_R} \left( j - \tau_R F_V(\xi_q + y/\sqrt{R})
  \right) \\
  & = \frac{\sqrt{R}}{\tau_R} \left( \tau_R q + h(\tau_R) - \tau_R F_V(\xi_q
    + y/\sqrt{R})  \right) \\
  & = - \frac{y}{y} \frac{\sqrt{R}}{\tau_R} \left( \tau_R F_V(\xi_q +
    y/\sqrt{R}) - \tau_R q \right)  + \frac{\sqrt{R}}{\tau_R} h(\tau_R)  \\
  & = - y \frac{\sqrt{R}}{y} \left( F_V(\xi_q +
    y/\sqrt{R}) - F_V(\xi_q)  \right)  + \frac{\sqrt{R}}{\tau_R} h(\tau_R) \\
  & = - y  \left(\frac{ F_V(\xi_q +
    y/\sqrt{R}) - F_V(\xi_q)}{ y/\sqrt{R}}  \right)
  +  \frac{h(\tau_R)}{\sqrt{\bar N} \sqrt{\tau_R}},
\end{align*}
which, as $R \to \infty$, converges to $-yf_V(\xi_q)$ since $\bar N \to
E(N_1)$ with probability $1$ where $1 \le E(N_1) < \infty$ by Kac's theorem.  If $y=0$, then $s_{R}= h(\tau_R)/\sqrt{\bar N} \sqrt{\tau_R}$ and hence $s_{R} \to 0$ as $R \to \infty$.  Thus for all $y$ we have $s_{R} \to -yf_V(\xi_q)$ as $R \to \infty$.

Second, consider $W_{R, k}$ 
\begin{equation*}
\frac{\sqrt{R}}{\tau_{R} \left[ \Gamma \left( \xi_{q} + y / \sqrt{R} \right) \right] ^ {1/2}} \sum_{k=0}^{\tau_{R}-1} W_{R, k} \stackrel{d}{\rightarrow} \text{N} (0, 1) \; .
\end{equation*}
Lemma~\ref{lem:cont} and the continuous mapping theorem imply
\begin{equation} \label{eq:cltY}
\frac{\sqrt{R}}{\tau_{R} \left[ \Gamma \left( \xi_{q} \right) \right] ^ {1/2}} \sum_{k=0}^{\tau_{R}-1} W_{R, k} \stackrel{d}{\rightarrow} \text{N} (0, 1) \; .
\end{equation}
Using $s_{R} \to -y f_{V}(\xi_{Q})$ as $R \to \infty$,  \eqref{eq:cltY}, and Slutsky's Theorem, we conclude that, as $R \to \infty$,
\begin{align*}
P \left( \sqrt{R} \left(Y_{\tau_{R} (j)} - \xi_{q} \right) \le y \right) & = P \left( \frac{\sqrt{R}}{\tau_{R} \left[ \Gamma \left( \xi_{q} \right) \right] ^ {1/2}} \sum_{k=0}^{\tau_{R}-1} W_{R, k} \ge \frac{s_{R}}{ \left[ \Gamma \left( \xi_{q} \right) \right] ^ {1/2}} \right) \\
&\rightarrow 1 - \Phi \left\{ \frac{-y f_{V} \left( \xi_{q} \right) }{\left[ \Gamma \left( \xi_{q} \right) \right] ^ {1/2}} \right\}   = \Phi \left\{ \frac{y f_{V} \left( \xi_{q} \right) }{\left[ \Gamma \left( \xi_{q} \right) \right] ^ {1/2}} \right\} \; ,
\end{align*}
resulting in
\[
\sqrt{R} \left( Y_{\tau_{R} (j)} - \xi_{q} \right) \stackrel{d}{\rightarrow} \mathrm{N} \left( 0 , \frac{\Gamma \left( \xi_{q} \right)}{f_{V}^2 \left( \xi_{q} \right)} \right) \; .
\]
\end{proof}

\section{Regenerative simulation in example of Section \ref{sec:t}}
\label{app:regen_details}
The minorization condition necessary for RS is, at least in principle, quite 
straightforward for a Metropolis-Hastings algorithm.  Let $q(x,y)$ denote the 
proposal kernel density, and $\alpha(x,y)$ the acceptance probability.  Then 
$P(x, dy) \geq q(x,y) \alpha(x,y) dy$, since the right hand side only accounts 
for {\it accepted} jump proposals, and the minorization condition is established 
by finding $s'$ and $\nu'$ such that $q(x,y) \alpha(x,y) \geq s'(x) \nu'(y)$.  
By Theorem 2 of \cite{mykl:tier:yu:1995}, the probability of regeneration on 
an {\it accepted} jump from $x$ to $y$ is then given by
$$
r_A(x,y) = \frac{ s'(x) \nu'(y)}{ q(x,y) \alpha(x,y)} \; .
$$
Letting $\pi$ denote the (possibly unnormalized) target density, we have for a 
Metropolis random walk
$$
\alpha(x,y) = \min \left\{ \frac{\pi(y)}{\pi(x)}, ~1 \right\} \geq \min 
\left\{ \frac{c}{\pi(x)}, ~1 \right\} \min \left\{ \frac{\pi(y)}{c}, ~1 \right\} 
$$ 
for any positive constant $c$.  
Further, for any point $\tilde{x}$ and any set $D$ we have
$$
q(x,y) \geq \inf_{y \in D} \left\{ \frac{q(x,y)}{q(\tilde{x},y)} \right\} 
q(\tilde{x}, y) I_D(y) \; .
$$
Together, these inequalities suggest one possible choice of $s'$ and $\nu'$, 
which results in
\begin{equation}
\label{eqn:accept.prob}
r_A(x,y) = I_D(y) \times 
\frac{\inf_{y \in D} \left\{ q(x,y)/q(\tilde{x},y) 
\right\} }{q(x,y) / q(\tilde{x},y)} \times \frac{ \min \left\{ c/\pi(x), 1 \right\} 
\min \left\{ \pi(y)/c , 1 \right\} }{ \min \left\{ \pi(y)/\pi(x), 1 \right\} } \; .
\end{equation}
For a $t(v)$ target distribution, $\alpha(x,y)$ reduces to
$$
\min \left\{ \left( \frac{ v + x^2}{v + y^2} \right)^{\frac{v+1}{2}} 
, ~1 \right\} \geq \min \left\{ \left( \frac{v+x^2}{c} \right)^{\frac{v+1}{2}}, ~ 1 
\right\} \times \min \left\{ \left( \frac{c}{v+y^2} \right)^{\frac{v+1}{2}}, ~ 1 
\right\}
$$
and the last component of \eqref{eqn:accept.prob} is given, up to the constant $c$, 
by
$$
\left[ \frac{ \min \left\{ v + x^2 , c \right\} }{ \min \left\{ v+x^2, v+y^2 
\right\} } \times \frac{ v+y^2}{ \max \left\{ v+y^2, c \right\} } 
\right]^{\frac{v+1}{2}} \; .
$$
%giving us the last component of \eqref{eqn:accept.prob} up to the constant $c$.  
Since this piece of the acceptance probability takes the value 1 whenever 
$v + x^2 < c < v + y^2$ or $v + y^2 < c < v + x^2$, it makes sense to take $c$ 
equal to the median value of $v + X^2$ under the target distribution.  

The choice of $\tilde{x}$ and $D$, and the functional form of the middle component 
of \eqref{eqn:accept.prob}, will of course depend on the proposal distribution.  
For the Metropolis random walk with Normally distributed jump proposals, 
$q(x,y) \propto \exp \left\{ -\frac{1}{2 \sigma^2} (y - x)^2 \right\}$, taking 
$D = [\tilde{x} - d, ~\tilde{x} + d]$ for $d > 0$ gives 
%we take $\tilde{x} = 0$ and $D = \left[ -d , d \right]$ and
$$
\frac{\inf_{y \in D} \left\{ q(x,y)/q(\tilde{x},y) 
\right\} }{q(x,y) / q(\tilde{x},y)} = \exp \left\{ -\frac{1}{\sigma^2} \left\{
(x - \tilde{x})(y - \tilde{x}) + d|x - \tilde{x}| \right\} \right\} \; .
$$
For the $t(v)$ distributions we can take $\tilde{x} = 0$ in all cases, but the 
choice of $d$ should depend on $v$.  With the goal of maximizing regeneration 
frequency, we arrived at, by trial and error, $d = 2\sqrt{ v/(v-2) }$, or two 
standard deviations in the target distribution.

\end{appendix}

\bibliographystyle{apalike} 
\bibliography{ref}

\begin{thebibliography}{}

\bibitem[Acosta et~al., 2014]{acost:hube:jone:2014}
Acosta, F., Huber, M.~L., and Jones, G.~L. (2014).
\newblock Markov chain {M}onte {C}arlo with linchpin variables.
\newblock {\em Preprint}.

\bibitem[Arcones and Yu, 1994]{arco:yu:1994}
Arcones, M.~A. and Yu, B. (1994).
\newblock Central limit theorems for empirical and {$U$}-processes of
  stationary mixing sequences.
\newblock {\em Journal of Theoretical Probability}, 7:47--71.

\bibitem[Baxendale, 2005]{baxe:2005}
Baxendale, P.~H. (2005).
\newblock Renewal theory and computable convergence rates for geometrically
  ergodic {M}arkov chains.
\newblock {\em The Annals of Applied Probability}, 15:700--738.

\bibitem[Bertail and Cl\'{e}men\c{c}on, 2006]{bert:clem:2006}
Bertail, P. and Cl\'{e}men\c{c}on, S. (2006).
\newblock Regenerative block-bootstrap for {M}arkov chains.
\newblock {\em Bernoulli}, 12:689--712.

\bibitem[Bosq, 1998]{bosq:1998}
Bosq, D. (1998).
\newblock {\em Nonparametric Statistics for Stochastic Processes: Estimation
  and Prediction}.
\newblock Springer, New York.

\bibitem[Bradley, 1985]{brad:1985}
Bradley, R.~C. (1985).
\newblock On the central limit question under absolute regularity.
\newblock {\em The Annals of Probability}, 13:1314--1325.

\bibitem[Bradley, 1986]{brad:1986}
Bradley, R.~C. (1986).
\newblock Basic properties of strong mixing conditions.
\newblock In Eberlein, E. and Taqqu, M.~S., editors, {\em Dependence in
  Probability and Statistics: A Survey of Recent Results}, pages 165--192.
  Birkhauser, Cambridge, {MA}.

\bibitem[Brooks and Roberts, 1999]{broo:robe:1999}
Brooks, S.~P. and Roberts, G.~O. (1999).
\newblock On quantile estimation and {M}arkov chain {M}onte {C}arlo
  convergence.
\newblock {\em Biometrika}, 86:710--717.

\bibitem[B\"{u}hlmann, 2002]{buhl:2002}
B\"{u}hlmann, P. (2002).
\newblock Bootstraps for time series.
\newblock {\em Statistical Science}, 17:52--72.

\bibitem[Carlstein, 1986]{carl:1986b}
Carlstein, E. (1986).
\newblock The use of subseries values for estimating the variance of a general
  statistic from a stationary sequence.
\newblock {\em The Annals of Statistics}, 14:1171--1179.

\bibitem[Chan and Geyer, 1994]{chan:geye:1994}
Chan, K.~S. and Geyer, C.~J. (1994).
\newblock Comment on ``{M}arkov chains for exploring posterior distributions''.
\newblock {\em The Annals of Statistics}, 22:1747--1758.

\bibitem[Chow and Teicher, 1978]{chow:teic:1978}
Chow, Y.~S. and Teicher, H. (1978).
\newblock {\em Probability Theory}.
\newblock Springer-Verlag, New York.

\bibitem[Cowles and Carlin, 1996]{cowl:carl:1996}
Cowles, M.~K. and Carlin, B.~P. (1996).
\newblock Markov chain {M}onte {C}arlo convergence diagnostics: {A} comparative
  review.
\newblock {\em Journal of the American Statistical Association}, 91:883--904.

\bibitem[Datta and McCormick, 1993]{datt:mcco:1993}
Datta, S. and McCormick, W.~P. (1993).
\newblock Regeneration-based bootstrap for {M}arkov chains.
\newblock {\em The Canadian Journal of Statistics}, 21:181--193.

\bibitem[Davydov, 1973]{davy:1973}
Davydov, Y.~A. (1973).
\newblock Mixing conditions for {M}arkov chains.
\newblock {\em Theory of Probability and Its Applications}, 27:312--328.

\bibitem[Doss and Tan, 2013]{doss:tan:2013}
Doss, H. and Tan, A. (2013).
\newblock Estimates and standard errors for ratios of normalizing constants
  from multiple markov chains via regeneration.
\newblock {\em Journal of the Royal Statistical Society: Series B {\rm (to
  appear)}}.

\bibitem[Efron and Morris, 1975]{efro:morr:1975}
Efron, B. and Morris, C. (1975).
\newblock Data analysis using {S}tein's estimator and its generalizations.
\newblock {\em Journal of the American Statistical Association}, 70:311--319.

\bibitem[Flegal, 2012]{fleg:2012}
Flegal, J.~M. (2012).
\newblock Applicability of subsampling bootstrap methods in {M}arkov chain
  {M}onte {C}arlo.
\newblock In Wozniakowski, H. and Plaskota, L., editors, {\em Monte Carlo and
  Quasi-Monte Carlo Methods 2010}, volume~23, pages 363--372. Springer
  Proceedings in Mathematics \& Statistics.

\bibitem[Flegal and Gong, 2014]{fleg:gong:2014}
Flegal, J.~M. and Gong, L. (2014).
\newblock Relative fixed-width stopping rules for {M}arkov chain {M}onte
  {C}arlo simulations.
\newblock {\em Statistica Sinica {\rm (to appear)}}.

\bibitem[Flegal et~al., 2008]{fleg:hara:jone:2008}
Flegal, J.~M., Haran, M., and Jones, G.~L. (2008).
\newblock {M}arkov chain {M}onte {C}arlo: Can we trust the third significant
  figure?
\newblock {\em Statistical Science}, 23:250--260.

\bibitem[Flegal and Hughes, 2012]{fleg:hugh:2012}
Flegal, J.~M. and Hughes, J. (2012).
\newblock mcmcse: Monte {C}arlo standard errors for {MCMC} {R} package version
  1.0-1.
\newblock http://cran.r-project.org/web/packages/mcmcse/index.html.

\bibitem[Flegal and Jones, 2010]{fleg:jone:2010}
Flegal, J.~M. and Jones, G.~L. (2010).
\newblock Batch means and spectral variance estimators in {M}arkov chain
  {M}onte {C}arlo.
\newblock {\em The Annals of Statistics}, 38:1034--1070.

\bibitem[Flegal and Jones, 2011]{fleg:jone:2011}
Flegal, J.~M. and Jones, G.~L. (2011).
\newblock Implementing {M}arkov chain {M}onte {C}arlo: Estimating with
  confidence.
\newblock In Brooks, S., Gelman, A., Jones, G., and Meng, X., editors, {\em
  Handbook of {M}arkov Chain {M}onte {C}arlo}, pages 175--197. Chapman \&
  Hall/CRC Press.

\bibitem[Fort and Moulines, 2003]{fort:moul:2003}
Fort, G. and Moulines, E. (2003).
\newblock Polynomial ergodicity of {M}arkov transition kernels.
\newblock {\em Stochastic Processes and their Applications}, 103:57--99.

\bibitem[Geyer, 2011]{geye:2011}
Geyer, C.~J. (2011).
\newblock Introduction to {M}arkov chain {M}onte {C}arlo.
\newblock In {\em Handbook of Markov Chain Monte Carlo}. CRC, London.

\bibitem[Gilks et~al., 1998]{gilk:robe:sahu:1998}
Gilks, W.~R., Roberts, G.~O., and Sahu, S.~K. (1998).
\newblock Adaptive {M}arkov chain {M}onte {C}arlo through regeneration.
\newblock {\em Journal of the American Statistical Association}, 93:1045--1054.

\bibitem[Glynn and Ormoneit, 2002]{glyn:ormo:2002}
Glynn, P. and Ormoneit, D. (2002).
\newblock Hoeffding's inequality for uniformly ergodic {M}arkov chains.
\newblock {\em Statistics \& Probability Letters}, 56:143--146.

\bibitem[Hobert et~al., 2002]{hobe:jone:pres:rose:2002}
Hobert, J.~P., Jones, G.~L., Presnell, B., and Rosenthal, J.~S. (2002).
\newblock On the applicability of regenerative simulation in {M}arkov chain
  {M}onte {C}arlo.
\newblock {\em Biometrika}, 89:731--743.

\bibitem[Hobert et~al., 2006]{hobe:jone:robe:2006}
Hobert, J.~P., Jones, G.~L., and Robert, C.~P. (2006).
\newblock Using a {M}arkov chain to construct a tractable approximation of an
  intractable probability distribution.
\newblock {\em Scandinavian Journal of Statistics}, 33:37--51.

\bibitem[Jarner and Roberts, 2007]{jarn:robe:2007}
Jarner, S.~F. and Roberts, G.~O. (2007).
\newblock Convergence of heavy-tailed {M}onte {C}arlo {M}arkov chain
  algorithms.
\newblock {\em Scandinvian Journal of Statistics}, 24:101--121.

\bibitem[Jones, 2004]{jone:2004}
Jones, G.~L. (2004).
\newblock On the {M}arkov chain central limit theorem.
\newblock {\em Probability Surveys}, 1:299--320.

\bibitem[Jones et~al., 2006]{jone:hara:caff:neat:2006}
Jones, G.~L., Haran, M., Caffo, B.~S., and Neath, R. (2006).
\newblock Fixed-width output analysis for {M}arkov chain {M}onte {C}arlo.
\newblock {\em Journal of the American Statistical Association},
  101:1537--1547.

\bibitem[Jones and Hobert, 2001]{jone:hobe:2001}
Jones, G.~L. and Hobert, J.~P. (2001).
\newblock Honest exploration of intractable probability distributions via
  {M}arkov chain {M}onte {C}arlo.
\newblock {\em Statistical Science}, 16:312--334.

\bibitem[Jones and Hobert, 2004]{jone:hobe:2004}
Jones, G.~L. and Hobert, J.~P. (2004).
\newblock Sufficient burn-in for {G}ibbs samplers for a hierarchical random
  effects model.
\newblock {\em The Annals of Statistics}, 32:784--817.

\bibitem[{\L}atuszy\'{n}ski et~al., 2012]{latu:niem:2012}
{\L}atuszy\'{n}ski, K., Miasojedow, B., and Niemiro, W. (2012).
\newblock Nonasymptotic bounds on the estimation error of {MCMC} algorithms.
\newblock {\em {\rm To appear in} Bernoulli}.

\bibitem[{\L}atuszy\'{n}ski and Niemiro, 2011]{latu:niem:2011}
{\L}atuszy\'{n}ski, K. and Niemiro, W. (2011).
\newblock Rigorous confidence bounds for {MCMC} under a geometric drift
  condition.
\newblock {\em Journal of Complexity}, 27:23--38.

\bibitem[Liu and Wu, 1999]{liu:wu:1999}
Liu, J.~S. and Wu, Y.~N. (1999).
\newblock Parameter expansion for data augmentation.
\newblock {\em Journal of the American Statistical Association}, 94:1264--1274.

\bibitem[Meyn and Tweedie, 2009]{meyn:twee:2009}
Meyn, S. and Tweedie, R. (2009).
\newblock {\em Markov Chains and Stochastic Stability}, volume~2.
\newblock Cambridge University Press Cambridge.

\bibitem[Mykland et~al., 1995]{mykl:tier:yu:1995}
Mykland, P., Tierney, L., and Yu, B. (1995).
\newblock Regeneration in {M}arkov chain samplers.
\newblock {\em Journal of the American Statistical Association}, 90:233--241.

\bibitem[Politis, 2003]{poli:2003}
Politis, D.~N. (2003).
\newblock The impact of bootstrap methods on time series analysis.
\newblock {\em Statistical Science}, 18:219--230.

\bibitem[Politis et~al., 1999]{poli:roma:wolf:1999}
Politis, D.~N., Romano, J.~P., and Wolf, M. (1999).
\newblock {\em Subsampling}.
\newblock Springer-Verlag Inc.

\bibitem[Raftery and Lewis, 1992]{raft:lewi:how:1992}
Raftery, A.~E. and Lewis, S.~M. (1992).
\newblock How many iterations in the {G}ibbs sampler?
\newblock In Bernardo, J.~M., Berger, J.~O., Dawid, A.~P., and Smith, A. F.~M.,
  editors, {\em Bayesian Statistics 4. Proceedings of the Fourth Valencia
  International Meeting}, pages 763--773. Clarendon Press.

\bibitem[Robinson, 1983]{robi:1983}
Robinson, P.~M. (1983).
\newblock Nonparametric estimators for time series.
\newblock {\em Journal of Time Series Analysis}, 4:185--207.

\bibitem[Rosenthal, 1995]{rose:1995a}
Rosenthal, J.~S. (1995).
\newblock Minorization conditions and convergence rates for {M}arkov chain
  {M}onte {C}arlo.
\newblock {\em Journal of the American Statistical Association}, 90:558--566.

\bibitem[Rosenthal, 1996]{rose:1996}
Rosenthal, J.~S. (1996).
\newblock Analysis of the {G}ibbs sampler for a model related to
  {J}ames-{S}tein estimators.
\newblock {\em Statistics and Computing}, 6:269--275.

\bibitem[Roy and Hobert, 2007]{roy:hobe:2007}
Roy, V. and Hobert, J.~P. (2007).
\newblock Convergence rates and asymptotic standard errors for {M}arkov chain
  {M}onte {C}arlo algorithms for {B}ayesian probit regression.
\newblock {\em Journal of the Royal Statistical Society, Series B},
  69:607--623.

\bibitem[Rudolf, 2012]{rudo:2012}
Rudolf, D. (2012).
\newblock Explicit error bounds for {M}arkov chain {M}onte {C}arlo.
\newblock {\em Dissertationes Mathematicae}, 485.

\bibitem[Samur, 2004]{samu:2004}
Samur, J.~D. (2004).
\newblock A regularity condition and a limit theorem for {H}arris ergodic
  {M}arkov chains.
\newblock {\em Stochastic Processes and their Applications}, 111:207--235.

\bibitem[Serfling, 1981]{serf:1981}
Serfling, R.~J. (1981).
\newblock {\em Approximation Theorems of Mathematical Statistics}.
\newblock Wiley-Interscience.

\bibitem[van~der Vaart and Wellner, 1996]{vaar:well:1996}
van~der Vaart, A.~W. and Wellner, J.~A. (1996).
\newblock {\em {Weak Convergence and Empirical Processes}}.
\newblock Springer Series in Statistics. Springer-Verlag, New York.

\bibitem[van Dyk and Meng, 2001]{vand:meng:2001}
van Dyk, D.~A. and Meng, X.-L. (2001).
\newblock The art of data augmentation.
\newblock {\em Journal of Computational and Graphical Statistics}, 10:1--50.

\bibitem[Wang et~al., 2011]{wang:hu:yang:2011}
Wang, X., Hu, S., and Yang, W. (2011).
\newblock The {B}ahadur representation for sample quantiles under strongly
  mixing sequence.
\newblock {\em Journal of Statistical Planning and Inference}, 141:655--662.

\end{thebibliography}

\end{document}